\definecolor{wineRed}{rgb}{0.7,0,0.3}
\definecolor{grandBleu}{rgb}{0,0,0.8}
\definecolor{darkGreen}{rgb}{0,0.4,0}
\definecolor{blueViolet}{rgb}{0.4,0,1.0}
\definecolor{bloodOrange}{rgb}{0.85,0.05,0}
\definecolor{mycolor}{rgb}{0.8,0,0.2}
\DeclareMathAlphabet{\mathpzc}{OT1}{pzc}{m}{it}
\numberwithin{equation}{section}
\theoremstyle{plain}
\newtheorem{mainThm}{Main Theorem} 
\newtheorem{lemma}{Lemma}[section]
\theoremstyle{definition}
\newtheorem{definition}{Definition}
\newtheorem{theorem}{Theorem}
\def\F{\mathcal{F}}
\def\N{\mathbb{N}}
\def\R{\mathbb{R}}
\def\L{\mathcal{L}}
\def\sH{\mathscr{H}}
\def\cT{\mathcal{T}}
\def\sV{\mathscr{V}}
\def\ds{\displaystyle}
\def\Lap{\mathit{\Delta}}
\DeclareMathOperator{\diver}{div}
\begin{document}
\vspace*{-0cm}
\title{Well-posedness of a Pseudo-Parabolic {K}{W}{C} System \\ in Materials Science
\footnotemark[1]
\vspace{-2ex}
}
\author{Harbir Antil
}
\affiliation{Department of Mathematical Sciences and \\ the Center for Mathematics and Artificial Intelligence,  (CMAI), 
\\
George Mason University, Fairfax, VA 22030, USA}
\email{hantil@gmu.edu}
\vspace{-2ex}

\sauthor{Daiki Mizuno
}
\saffiliation{Division of Mathematics and Informatics, \\ Graduate School of Science and Engineering, Chiba University, \\ 1-33, Yayoi-cho, Inage-ku, 263-8522, Chiba, Japan}
\semail{d-mizuno@chiba-u.jp}
\vspace{-2ex}

\tauthor{Ken Shirakawa
}
\taffiliation{Department of Mathematics, Faculty of Education, Chiba University \\ 1--33 Yayoi-cho, Inage-ku, 263--8522, Chiba, Japan}
\temail{sirakawa@faculty.chiba-u.jp}
\vspace{-2ex}

\footcomment{
$^*$\,This work is supported by  Grant-in-Aid for Scientific Research (C) No. 20K03672, JSPS.
HA and KS are also partially supported by NSF grant DMS-2110263, the Air Force Office
of Scientific Research (AFOSR) under Award NO: FA9550-22-1-0248, and the 
Office of Naval Research (ONR) under Award NO: N00014-24-1-2147.\\
AMS Subject Classification: 
35A15, 
35G50, 
35G61, 
35K70, 
74N20. 
\\
Keywords: KWC-type system, pseudo-parabolic nature, existence, uniqueness, regularity, continuous-dependence
}
\maketitle

\noindent
{\bf Abstract.}
The original KWC-system is widely used in materials science. It was proposed in [Kobayashi et al, Physica D, 140, 141--150 (2000)] and is based on the phase field model of planar grain boundary motion. This model suffers from two key challenges. Firstly, it is difficult to establish its relation to physics, in particular, a variational model. Secondly, it lacks uniqueness. The former has been recently studied  within the realm of BV-theory. The latter only holds under various simplifications. This article introduces a pseudo-parabolic version of the KWC-system. A direct relationship with variational model (as gradient-flow) and uniqueness are established without making any unrealistic simplifications. Namely, this is the first KWC-system which is both physically and mathematically valid. The proposed model overcomes the well-known open issues.
\newpage

\section{Introduction}

The Kobayashi--Warren--Carter (KWC) system consists of a set of non-smooth parabolic PDEs and is widely used in materials science \cite{MR1752970,MR1794359}. It is based on the phase field model of planar grain boundary motion. This model suffers from two fundamental challenges: 
(i) \emph{Physics:} It is difficult to establish KWC-system as the gradient flow of a variational model; 
(ii) \emph{Mathematics:} The solution to KWC-system are known to be unique only under special 
cases. Both of these challenges makes it difficult to rigorously use this model in practice or carry 
our new material design via optimization \cite{MR4218112,MR4395725}. 

This article aims to overcome both these challenges by introducing a pseudo-parabolic version 
of the KWC-system. Well-posedness (both existence and uniqueness) of the resulting system (S),
which arise from gradient flow based on the \emph{KWC-energy}
\begin{gather}
    \mathcal{F} : [\eta, \theta] \in [L^2(\Omega)]^2 \mapsto \mathcal{F}(\eta, \theta) := \frac{1}{2} \int_\Omega |\nabla \eta|^2 \, dx +\int_\Omega G(\eta) \, dx 
    \label{FE}
    \\
    +\int_\Omega \alpha(\eta) |D \theta| \in [0, \infty],
    \nonumber
\end{gather}
is established. Namely, this article addresses open issues from previous works that deal with the KWC-system (cf. \cite{MR3268865,MR3670006,MR3038131,
MR4352617,nakayashiki2023kobayashi,
kubota2023periodic}) and its regularized versions (cf. \cite{MR2469586,MR3155454,
MR3888633,MR4218112,MR4395725,MR4228007}). 

Next, we describe the system (S). Let $ 0 < T < \infty $ be a fixed final time, and let $ N \in \{1, 2, 3\} $ 
denote the spatial dimension. Let $ \Omega \subset \R^N $ be an open bounded spatial domain with a boundary $ \Gamma := \partial \Omega $. When $ N > 1 $, $ \Gamma $ is assumed to be sufficiently 
smooth, with the unit outer normal $ n_\Gamma $. Besides, we let $ Q := (0, T) \times \Omega $ and 
$ \Sigma := (0, T) \times \Gamma $.  
Then the pseudo-parabolic system denoted by (S), with two constants $ \mu \geq 0 $ and $ \nu > 0 $, 
is given by 
\begin{align}
    \mbox{(S)}: ~~&
    \nonumber
    \\
    & \begin{cases}
        \partial_t \eta -\mathit{\Delta} \bigl( \eta +\mu^2 \partial_t \eta \bigr) +g(\eta) +\alpha'(\eta(t))|\nabla \theta| = u(t, x), ~ \mbox{for $ (t, x) \in Q $,}
        \\[0.5ex]
        \nabla \bigl( \eta +\mu^2 \partial_t \eta \bigr) \cdot n_\Gamma =  0, ~ \mbox{on $ \Sigma $,}
        \
        \\[0.5ex]
        \eta(0, x) = \eta_0(x),~ \mbox{for $ x \in \Omega $,}
    \end{cases}
    \nonumber
    \\[1.5ex]
    & \begin{cases}
        \ds \alpha_0(\eta) \partial_t \theta -\mathrm{div} \left( \alpha(\eta) \frac{D \theta}{|D \theta|} +\nu^2 \nabla \partial_t \theta \right) = v(t, x), ~ \mbox{for $ (t, x) \in Q $,}
        \\[1ex]
        \bigl( \alpha(\eta) \frac{D \theta}{|D \theta|} +\nu^2 \nabla \partial_t \theta \bigr) \cdot n_\Gamma = 0 ~ \mbox{on $ \Sigma $,}
        \\[0.5ex]
        \theta(0, x) = \theta_0(x),~ \mbox{for $ x \in \Omega $.}
    \end{cases}
    \nonumber
\end{align} 
Here, the unknowns $ \eta = \eta(t, x) $ and $ \theta = \theta(t, x) $ are order parameters that indicate the \emph{orientation order} and \emph{orientation angle} of the polycrystal body, respectively. Besides, $ \eta_0 = \eta_0(x) $ and $ \theta_0 = \theta_0(x) $ is the \emph{initial data}. Moreover, $ u = u(t, x) $ and $ v = v(t, x) $ are the forcing terms. Additionally, $ \alpha_0 = \alpha_0(\eta) $ and $ \alpha = \alpha(\eta) $ are fixed positive-valued functions to reproduce the mobilities of grain boundary motions. Finally, $ g = g(\eta) $ is a perturbation for the orientation order $ \eta $, having a nonnegative potential $ G = G(\eta) $, i.e. $ \frac{d}{d\eta} G(\eta) = g(\eta) $. 

A generic form of the ``KWC-system'' is given by the evolution equation (cf. \cite{MR1752970,MR1794359}):
\begin{gather}\label{EE}
    -\mathcal{A}_0\bigl(\eta(t) \bigr) \frac{d}{dt} \left[ \begin{matrix}
        \eta(t) \\[1ex] \theta(t)
    \end{matrix} \right] = \frac{\delta}{\delta [\eta, \theta]} \mathcal{F}(\eta(t), \theta(t)) +\left[ \begin{matrix}
        u(t) \\[1ex] v(t)
    \end{matrix}
    \right]
    \\[1ex]
    \mbox{in $ [L^2(\Omega)]^2 $, for $ t \in (0, T) $,}
    \nonumber
\end{gather}
which is motivated by the gradient flow of the free-energy, namely the KWC-energy \eqref{FE}, 
with a functional derivative $ \frac{\delta}{\delta [\eta, \theta]} \mathcal{F} $, and an unknown-dependent monotone operator $ \mathcal{A}_0(\eta) \subset [L^2(\Omega)]^2 $. Here, the evolution equation \eqref{EE} can be considered as the common root of the original KWC-system (cf. \cite{MR1752970,MR1794359}) and our system (S). Indeed, our system (S) is derived from the evolution equation \eqref{EE}, in the case when:
\begin{gather}
    \label{A_mu^nu}
    \mathcal{A}_0(\eta) : \left[ \begin{matrix}
        \widetilde{\eta} \\[1ex] \widetilde{\theta}
    \end{matrix} \right] \in 
    [H^2(\Omega)]^2 
    \mapsto \mathcal{A}_0(\eta) \left[ \begin{matrix}
        \widetilde{\eta} \\[1ex] \widetilde{\theta}
    \end{matrix} \right] :=  
    \left[ \begin{matrix}
        \widetilde{\eta} -\mu^2 \mathit{\Delta} \widetilde{\eta} \\[1ex] \alpha_0(\eta) \widetilde{\theta} -\nu^2 \mathit{\Delta} \widetilde{\theta}
    \end{matrix} \right] \in 
    [L^2(\Omega)]^2,
    \\
    \nonumber
    \mbox{for each $ \eta \in L^2(\Omega) $, subject to the zero-Neumann boundary condition,}
\end{gather}
while the original KWC-system corresponds to the case when $ \mu = \nu = 0 $. 

In recent years, the principal issue has been to clarify the variational structure (representation) of the functional derivative $ \frac{\delta}{\delta [\eta, \theta]} \mathcal{F} $ of the nonsmooth and nonconvex energy $ \mathcal{F}$ in \eqref{FE}. The positive answer to the issue was obtained in \cite{MR3268865,MR3670006,MR3038131}, by means of BV-theory (cf. \cite{MR1857292,MR3409135,MR0775682,MR3288271,MR1259102,MR2306643}), and this work has provided a basis of the study of KWC-system, e.g. the existence and large-time behavior \cite{MR3268865,MR3670006}, the observations under other boundary conditions \cite{MR4352617,nakayashiki2023kobayashi}, the time-periodic solution \cite{kubota2023periodic}, and so on. 

    However, the uniqueness of solutions has been a significant challenge, due to the velocity term $\alpha_0(\eta) \partial_t \theta $ and the singular diffusion flux $ \alpha(\eta) \frac{D \theta}{|D \theta|} $, both of which depend on the unknown-dependent mobilities. Therefore, previous researchers have implemented the following modifications to the modelling framework \eqref{FE} and \eqref{A_mu^nu}:
    \vspace{-1ex}
\begin{description}
    \item[$\bullet$] resetting $ \alpha_0 $ to be a function which is independent of $ \eta $ (effectively a constant);
    \vspace{1ex}
    \item[$\bullet$] modifying the free-energy functional to a more relaxed form:
\end{description}
\begin{gather*}
\mathcal{F}_\varepsilon : [\eta, \theta] \in [L^2(\Omega)]^2 \mapsto \mathcal{F}_\varepsilon(\eta, \theta) = \mathcal{F}(\eta, \theta) + \frac{\varepsilon^2}{2} \int_\Omega |\nabla \theta|^2 \, dx, 
    \\
    \text{with a small constant $ \varepsilon > 0 $.}
    \nonumber
\end{gather*}
These modifications have been pivotal in addressing the uniqueness challenges (cf. \cite{MR2469586,MR3155454}), and several advanced issues, such as the optimal control problems (see \cite{MR3888633,MR4218112,MR4395725,MR4228007}). 

In light of this, we can expect that the pseudo-parabolic nature of our system will effectively address the uniqueness challenge. This is due to the positive constants $ \mu $ and $ \nu $ in \eqref{A_mu^nu}, which are expected to bring a smoothing effect for the regularity of solution. In addition, it is also crucial from a mathematical perspective to clarify the similarities and differences between our pseudo-parabolic system and the original parabolic KWC-system. 

Consequently, we set the goal to prove the following two Main Theorems, concerned with the well-posedness of our pseudo-parabolic system (S), i.e. the evolution equation \eqref{EE} under \eqref{FE} and \eqref{A_mu^nu}. 
\vspace{2ex}
\begin{description}
    \item[Main Theorem 1.]Existence and regularity of solution to (S).
\vspace{1ex}
    \item[Main Theorem 2.]Uniqueness of solution to (S), and continuous dependence with respect to the initial data and forcings. 
\end{description}
\vspace{2ex}

    These Main Theorems will provide the positive answer to our earlier expectation regarding the effectiveness of the pseudo-parabolic nature of our system in resolving the uniqueness issue. Also, the Main Theorems will focus on two conflicting properties: the singularity in the diffusion flux $ \alpha(\eta)\frac{D \theta}{|D \theta|} $; and the smoothing effect encouraged by the Laplacian in \eqref{A_mu^nu}. This conflicting situation will be clarified by the differences in regularity between components: $ \eta \in W^{1, 2}(0, T; H^2(\Omega)) $; and $ \theta \in W^{1, 2}(0, T; H^1(\Omega)) \cap L^\infty(0, T; H^2(\Omega)) $; in the Main Theorem 1. Moreover, the results of this paper will form a fundamental part of the optimization problem in grain boundary motion, which will be exploring in a forthcoming paper. 

\medskip
\noindent 
{\bf Outline:} 
Preliminaries are given in Section 1, and on this basis, the Main Theorems are stated in Section 2. For the proofs of Main Theorems, we prepare Section 3 to set up an approximation method for (S). Based on these, the Main Theorems are proved in Section 4, by means of the auxiliary results obtained in Section 3. 

\section{Preliminaries}
\label{sec:Prelim}
We begin by prescribing the notations used throughout this paper. 
\bigskip

\noindent
\underline{\textbf{\textit{Notations in real analysis.}}}
We define:
\begin{align*}
    & r \vee s := \max \{ r, s \} ~ \mbox{ and } ~ r \wedge s := \min \{r, s\}, \mbox{ for all $ r, s \in [-\infty, \infty] $,}
\end{align*}
and especially, we write:
\begin{align*}
    & [r]^+ := r \vee 0 ~ \mbox{ and } ~ [r]^- := -(r \wedge 0), \mbox{ for all $ r \in [-\infty, \infty] $.}
\end{align*}
Additionally, for any $M > 0$, let $\cT_M: \R \longrightarrow [-M,M]$ be the truncation operator, defined as:
\begin{equation*}
    \cT_M: r \in \R \mapsto (r \vee (-M)) \wedge M \in [-M,M].
\end{equation*}

Let $ d \in \N $ be a fixed dimension. We denote by $ |y| $ and $ y \cdot z $ the Euclidean norm of $ y \in \mathbb{R}^d $ and the scalar product of $ y, z \in \R^d $, respectively, i.e., 
\begin{equation*}
\begin{array}{c}
| y | := \sqrt{y_1^2 +\cdots +y_d^2} \mbox{ \ and \ } y \cdot z  := y_1 z_1 +\cdots +y_d z_d, 
\\[1ex]
\mbox{ for all $ y = [y_1, \ldots, y_d], ~ z = [z_1, \ldots, z_d] \in \mathbb{R}^d $.}
\end{array}
\end{equation*}
Besides, we let:
\begin{align*}
    & \mathbb{B}^d := \left\{ \begin{array}{l|l}
        y \in \R^d & |y| < 1
    \end{array} \right\} ~ \mbox{ and } ~ \mathbb{S}^{d -1} := \left\{ \begin{array}{l|l}
        y \in \R^d & |y| = 1
    \end{array} \right\}.
\end{align*}
We denote by $\mathcal{L}^{d}$ the $ d $-dimensional Lebesgue measure, and we denote by $ \mathcal{H}^{d} $ the $ d $-dimensional Hausdorff measure.  In particular, the measure theoretical phrases, such as ``a.e.'', ``$dt$'', and ``$dx$'', and so on, are all with respect to the Lebesgue measure in each corresponding dimension. Also on a Lipschitz-surface $ S $, the phrase ``a.e.'' is with respect to the Hausdorff measure in each corresponding Hausdorff dimension. In particular, if $S$ is $C^1$-surface, then we simply denote by $dS$ the area-element of the integration on $S$.

For a Borel set $ E \subset \R^d $, we denote by $ \chi_E : \R^d \longrightarrow \{0, 1\} $ the characteristic function of $ E $. Additionally, for a distribution $ \zeta $ on an open set in $ \R^d $ and any $i \in \{ 1,\dots,d \}$, let $ \partial_i \zeta$ be the distributional differential with respect to $i$-th variable of $\zeta$. As well as we consider, the differential operators, such as $\nabla,\ \diver, \ \nabla^2$, and so on, in distributional sense.
\bigskip

\noindent
\underline{\textbf{\textit{Abstract notations. (cf. \cite[Chapter II]{MR0348562})}}}
For an abstract Banach space $ X $, we denote by $ |\cdot|_{X} $ the norm of $ X $, and denote by $ \langle \cdot, \cdot \rangle_X $ the duality pairing between $ X $ and its dual $ X^* $. In particular, when $ X $ is a Hilbert space, we denote by $ (\cdot,\cdot)_{X} $ the inner product of $ X $. 

For two Banach spaces $ X $ and $ Y $,  let $  \mathscr{L}(X; Y)$ be the Banach space of bounded linear operators from $ X $ into $ Y $. 

For Banach spaces $ X_1, \dots, X_d $ with $ 1 < d \in \N $, let $ X_1 \times \dots \times X_d $ be the product Banach space endowed with the norm $ |\cdot|_{X_1 \times \cdots \times X_d} := |\cdot|_{X_1} + \cdots +|\cdot|_{X_d} $. However, when all $ X_1, \dots, X_d $ are Hilbert spaces, $ X_1 \times \dots \times X_d $ denotes the product Hilbert space endowed with the inner product $ (\cdot, \cdot)_{X_1 \times \cdots \times X_d} := (\cdot, \cdot)_{X_1} + \cdots +(\cdot, \cdot)_{X_d} $ and the norm $ |\cdot|_{X_1 \times \cdots \times X_d} := \bigl( |\cdot|_{X_1}^2 + \cdots +|\cdot|_{X_d}^2 \bigr)^{\frac{1}{2}} $. In particular, when all $ X_1, \dots,  X_d $ coincide with a Banach space $ Y $, the product space $X_1 \times \dots \times X_d$ is simply denoted by $[Y]^d$.
\bigskip

\noindent
\underline{\textbf{\textit{Basic notations.}}}
Let $0 < T < \infty$ be a fixed constant of time, and let $N \in \{ 1,2,3 \}$ is a fixed dimension. Let $\Omega \subset \R^N$ be a bounded domain with a boundary $\Gamma := \partial\Omega$, and when $N > 1$, $\Gamma$ has $C^\infty$-regularity with the unit outer normal $n_\Gamma$. Additionally, as notations of base spaces, we let:
\begin{equation*}
    H := L^2(\Omega), \ V := H^1(\Omega), ~ 
    \sH := L^2(0,T;H),  \mbox{ and } \sV := L^2(0,T;V).
\end{equation*}
Let $W_0 \subset H^2(\Omega)$ be the closed linear subspace of $H$, given by:
\begin{equation*}
  W_0 := \{ z \in H^2(\Omega)\,|\, \nabla z \cdot n_\Gamma = 0 \mbox{ on } \Gamma \}.
\end{equation*}
Let $A_N$ be a differential operator, defined as:
\begin{equation*}
    A_N: \, z \in W_0 \subset H \mapsto A_N z := - \mathit{\Delta} z \in H.
\end{equation*}
It is well-known that $ A_N \subset H \times H $ is linear, positive, and self-adjoint, and the domain $ W_0 $ is a Hilbert space, endowed with the inner product:
    \begin{gather*}
        (z_1, z_2)_{W_0} := (z_1, z_2)_H +(A_Nz_1, z_2)_H ~ \bigl( = (z_1, z_2)_V \bigr), \mbox{ for $ z_k \in W_0 $, $k = 1, 2$.}
    \end{gather*}
    Moreover, there exists a positive constant $ C_0 $ such that:
    \begin{gather}\label{embb01}
        |z|_{H^2(\Omega)}^2 \leq C_0 \bigl( |A_N z|_H^2 + |z|_H^2 \bigr),~ \mbox{ for all $ z \in W_0 $.}
    \end{gather}
\medskip

\noindent
\underline{\textbf{\textit{Notations for the time-discretization.}}}
Let $\tau > 0$ be a constant of the time step-size, and let $\{ t_i \}_{i=0}^\infty \subset [0,\infty)$ be the time sequence defined as:
\begin{equation*}
  t_i := i\tau,\ i=0,1,2,\ldots.
\end{equation*}
Let $X$ be a Banach space. Then, for any sequence $\{ [t_i,z_i] \}_{i=0}^\infty \subset[0,\infty) \times X$, we define the \textit{forward time-interpolation} $[\overline{z}]_\tau \in L^\infty_\mathrm{loc}([0,\infty); X)$, the \textit{backward time-interpolation} $[\underline{z}]_\tau \in L^\infty_\mathrm{loc}([0,\infty);X)$ and the \textit{linear time-interpolation} $[z]_\tau \in W^{1,2}_\mathrm{loc}([0,\infty);X)$, by letting:
\begin{equation*}
  \left\{\begin{aligned}
    &[\overline{z}]_\tau(t) := \chi_{(-\infty,0]} z_0 + \sum_{i=1}^\infty \chi_{(t_{i-1}, t_i]}(t) z_i, \\
    &[\underline{z}]_\tau(t) := \sum_{i=0}^\infty \chi_{(t_{i}, t_{i+1}]}(t) z_{i}, \\
    &[z]_\tau (t) := \sum_{i=1}^\infty \chi_{[t_{i-1},t_i)}(t) \left(\frac{t-t_{i-1}}{\tau} z_{i} + \frac{t_i - t}{\tau} z_{i-1}\right),
  \end{aligned}\right. ~{\rm in}~ X,\ {\rm for}~ t \geq 0, \label{eq:tI}
\end{equation*}
respectively.

In the meantime, for any $ q \in [1, \infty) $ and any $ \zeta \in L_\mathrm{loc}^q([0, \infty); X) $, we denote by $ \{ \zeta_i \}_{i = 0}^\infty \subset X $ the sequence of time-discretization data of $ \zeta $, defined as:
\begin{subequations}\label{tI}
\begin{align}\label{tI01}
    & \zeta_0 := 0 \mbox{ in $X$, and }\zeta_i := \frac{1}{\tau} \int_{t_{i -1}}^{t_i} \zeta(\varsigma) \, d \varsigma ~ \mbox{ in $ X $, ~ for $ i = 1, 2, 3, \dots $.}
\end{align}
As is easily checked, the time-interpolations $ [\overline{\zeta}]_\tau, [\underline{\zeta}]_\tau \in L^q_\mathrm{loc}([0, \infty); X) $ for the above $ \{ \zeta_i \}_{i = 0}^\infty $ fulfill that:
\begin{align}\label{tI02}
    & [\overline{\zeta}]_\tau \to \zeta \mbox{ and } [\underline{\zeta}]_\tau \to \zeta \mbox{ in $ L^q_\mathrm{loc}([0, \infty); X) $, as $ \tau \downarrow 0 $.}
\end{align}
\end{subequations}

\section{Main results}
\label{sec:main}

In this paper, the main assertions are discussed under the following assumptions.
\vspace{2ex}

\begin{itemize}
  \item[(A1)] $\mu > 0$ and $\nu > 0$ are fixed constants.
    \vspace{1ex}
  \item[(A2)] $g: \R \longrightarrow \R$ is a locally Lipschitz continuous function with a nonnegative primitive $G \in C^1(\R)$. Moreover, $g$ satisfies the following condition:
  \begin{equation*}
    \liminf_{\xi \downarrow -\infty} g(\xi) = -\infty, \ \limsup_{\xi \uparrow \infty} g(\xi) = \infty.
  \end{equation*}
  \item[(A3)] $\alpha_0: \R \longrightarrow (0,\infty)$ is a locally Lipschitz continuous function, and $\alpha : \R \longrightarrow [0,\infty)$ is a $C^1$-class convex function, such that:
  \begin{equation*}
    \alpha'(0) = 0 \mbox{ and } \delta_{\alpha} := \inf \alpha_0(\R) > 0.
  \end{equation*}
  \item[(A4)] $u, v \in L_{\rm loc}^2([0,\infty);H)$, and $u \in L^\infty(Q)$.
    \vspace{1ex}
  \item[(A5)] The initial data $[\eta_0, \theta_0]$ belong to the class $[W_0]^2 \subset [H^2(\Omega)]^2$.
\end{itemize}
\vspace{2ex}

Now, the main results are stated as follows.
\medskip

\begin{mainThm}[Existence and regularity]\label{mainThm1}
    Under the assumptions (A1)--(A5), the system (S) admits a solution $[\eta, \theta] \in [\sH]^2 $ in the following sense.
    \vspace{2ex}
  \begin{itemize}
    \item[(S0)] $[\eta, \theta] \in \bigl[W^{1,2}(0,T;W_0) \cap L^\infty(Q)\bigr] \times \bigl[ W^{1,2}(0,T; V) \times L^\infty(0,T; W_0) \bigr]$.
    \vspace{1ex}
    \item[(S1)] $\eta$ solves the following variational identity:
    \begin{gather*}
      (\partial_t \eta(t), \varphi)_H +  (\nabla (\eta + \mu^2 \partial_t \eta)(t), \nabla \varphi)_{[H]^N} + (g(\eta(t)), \varphi)_H
        \\
        + (\alpha'(\eta(t))|\nabla \theta(t)|, \varphi)_H  = (u(t), \varphi)_H,
        \\
        \mbox{ for any } \varphi \in V, \mbox{ and a.e. } t \in (0,T).
    \end{gather*} 
    \item[(S2)] $\theta$ solves the following variational inequality:
    \begin{gather*}
        \hspace{-20ex}\bigl((\alpha_0(\eta) \partial_t \theta)(t), \theta(t) - \psi\bigr)_H + \nu^2 (\nabla \partial_t \theta(t), \nabla (\theta(t) - \psi))_{[H]^N}
      \\
       + \int_\Omega \alpha(\eta(t))|\nabla \theta(t)|\,dx \leq \int_\Omega \alpha(\eta(t)) |\nabla \psi|\,dx + (v(t), \theta(t) - \psi)_H, 
        \\ 
        \mbox{ for any } \psi \in V, \mbox{ and a.e. }t \in (0,T).
    \end{gather*}
      \vspace{-3ex}
    \item[(S3)] $[\eta(0),\theta(0)] = [\eta_0, \theta_0]$ in $[H]^2$.
  \end{itemize}
\end{mainThm}
\vspace{1ex}

\begin{mainThm}[Uniqueness and continuous dependence]
    \label{mainThm2}
      Under the assumptions (A1)--(A5), let $[\eta^k,\theta^k], \, k = 1,2$ be two solutions to (S) with two initial values $[\eta_0^k, \theta_0^k]$ and two forcings $[u^k, v^k]$, $k = 1,2$. Then, there exists a constant $C_1 = C_1(\nu) > 0$, depending on $\nu$, such that:
  \begin{gather*}
    J(t) \leq C_1(\nu) \bigl( J(0) + |u^1 - u^2|_\sH^2 + |v^1 - v^2|_\sH^2 \bigr), 
      \\ 
      \mbox{ for any } t \in [0,T] \mbox{ and any } T > 0,
  \end{gather*}
  where
  \begin{align*}
    J(t) := &|(\eta^1 - \eta^2)(t) |_H^2 + \mu^2|\nabla (\eta^1 - \eta^2)(t) |_{[H]^N}^2 + |\sqrt{\alpha_0(\eta^1)}(\theta^1 - \theta^2)(t)|_H^2
    \\
    &\quad + \nu^2| \nabla (\theta^1 - \theta^2)(t) |_{[H]^N}^2, \ \mbox{ for any } t \geq 0.
  \end{align*}
\end{mainThm}

\section{Approximating method}
\label{sec:approx}

In the Main Theorems, the solution to (S) will be obtained by means of the time-discretization method. In this light, let $\tau \in (0,1)$ be a constant of the time-step size, and let $\varepsilon \in (0,1)$ be a relaxation constant. Based on this, we adopt the following time-discretization scheme (AP)$_\tau^\varepsilon$, as our approximating problem of (S).
\vspace{2ex}

\noindent
(AP)$_\tau^\varepsilon$: To find $\{ [\eta_i, \theta_i] \}_{ i=0 }^\infty \subset [W_0]^2$ satisfying:
\begin{gather}
    \label{AP_eta}
    \frac{\eta_i^\varepsilon - \eta_{i-1}^\varepsilon}{\tau} +A_N \left( \eta_i^\varepsilon + \frac{\mu^2}{\tau} (\eta_i^\varepsilon - \eta_{i-1}^\varepsilon) \right) + g(\cT_M\eta_i^\varepsilon) 
    \\
    \nonumber
    + \alpha'(\cT_M\eta_i^\varepsilon) \gamma_\varepsilon(\nabla \theta_i^\varepsilon) = u_i \mbox{ in } H,
    \\[1ex]
    \frac{\alpha_0(\cT_M\eta_{i-1}^\varepsilon)}{\tau}(\theta_i^\varepsilon - \theta_{i-1}^\varepsilon) - \diver \bigl( \widetilde \alpha_M(\eta) \nabla \gamma_\varepsilon(\nabla \theta_i^\varepsilon) \bigr) 
  \label{AP_theta}
    \\
    + \frac{\nu^2}{\tau}A_N (\theta_i^\varepsilon - \theta_{i-1}^\varepsilon) = v_i \mbox{ in }H,
    \nonumber
    \\[1ex]
  \mbox{for } i = 1,2,3, \dots, \mbox{ subject to } [\eta_0^\varepsilon, \theta_0^\varepsilon] = [\eta_0, \theta_0] \mbox{ in } [H]^2.
    \nonumber
\end{gather}
In this context, $\gamma_\varepsilon \in C^{0,1}(\R^N) \cap C^\infty(\R^N)$ is a smooth approximation of the Euclidean norm $|\cdot| \in C^{0,1}(\R^N)$, defined as:
\begin{equation*}
  \gamma_\varepsilon: y \in \R^N \mapsto \gamma_\varepsilon(y) := \sqrt{\varepsilon^2 + |y|^2} \in [0,\infty).
\end{equation*}
Also, we define an approximating free-energy $\F_\varepsilon$ on $[H]^2$, by setting:
\begin{gather}
  \label{free_eng} 
    \F_\varepsilon:[\eta,\theta] \in [H]^2 \mapsto \F_\varepsilon(\eta,\theta) := \frac{1}{2} \int_\Omega |\nabla \eta|^2\,dx + \int_\Omega \widetilde G_M(\eta) 
    \\
    + \int_\Omega \widetilde \alpha_M(\eta) \gamma_\varepsilon(D\theta), \ \mbox{ for any } 0 < \varepsilon < 1.
    \nonumber
\end{gather}
where $\widetilde \alpha_M \in C^{1,1}(\R)$ and $\widetilde G_M \in C^{1,1}(\R)$ are nonnegative primitives of $\alpha \circ \cT_M \in W^{1,\infty}(\R)$ and $G \circ \cT_M \in W^{1,\infty}(\R)$, respectively. Finally, $u_i, v_i$ are given as in \eqref{tI}.

\medskip
The solution to (AP)$_\tau^\varepsilon$ is given as follows.

\begin{definition}
  The sequence of functions $\{ [\eta_i^\varepsilon, \theta_i^\varepsilon] \}_{i=0}^\infty$ is called a solution to (AP)$_\tau^\varepsilon$ iff. $\{ [\eta_i^\varepsilon, \theta_i^\varepsilon] \}_{i=0}^\infty \subset [W_0]^2$, and $[\eta_i^\varepsilon, \theta_i^\varepsilon]$ fulfills \eqref{AP_eta} and \eqref{AP_theta} for any $i = 1,2,3,\dots$.
\end{definition}

In this paper, the following theorem will plays an important role for the proof of Main Theorems.

\begin{theorem}[Solvability of the approximating problem] \label{AP}
  There exists a sufficiently small constant $\tau_0 \in (0,1)$ such that for any $\tau \in (0,\tau_0)$ and $\varepsilon \in (0,1)$, (AP)$_\tau^\varepsilon$ admits a unique solution $\{ [\eta_i^\varepsilon, \theta_i^\varepsilon] \}_{i = 0}^\infty$. Additionally, the following energy inequality holds:
  \begin{gather}
    \frac{1}{4\tau} |\eta_i^\varepsilon - \eta_{i-1}^\varepsilon|_H^2 + \frac{\mu^2}{\tau} |\nabla (\eta_i^\varepsilon - \eta_{i-1}^\varepsilon) |_{[H]^N}^2 + \frac{\delta_{\alpha}}{2\tau} |\theta_i^\varepsilon - \theta_{i-1}^\varepsilon|_H^2 
      \label{Energy1}
    \\
    + \frac{\nu^2}{\tau} |\nabla (\theta_i^\varepsilon - \theta_{i-1}^\varepsilon)|_{[H]^N}^2 + \F_\varepsilon(\eta_i^\varepsilon, \theta_i^\varepsilon) 
      \nonumber
    \\
    \leq \F_\varepsilon(\eta_{i-1}^\varepsilon, \theta_{i-1}^\varepsilon) + \frac{\tau}{2}|u_i|_H^2 + \frac{\tau}{2\delta_{\alpha}}|v_i|_H^2, \ \mbox{ for any } i =  1,2,3,\dots.
      \nonumber
  \end{gather}
\end{theorem}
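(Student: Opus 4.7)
The plan is an induction on $i$: assuming that $[\eta_{i-1}^\varepsilon,\theta_{i-1}^\varepsilon]\in[W_0]^2$ has already been constructed, I will produce $[\eta_i^\varepsilon,\theta_i^\varepsilon]$ via two successive variational problems. The key structural observation is that the coefficient $\widetilde\alpha_M(\cdot)$ in \eqref{AP_theta} is evaluated at the previous step $\eta_{i-1}^\varepsilon$, so \eqref{AP_theta} decouples from \eqref{AP_eta}; once $\theta_i^\varepsilon$ is known, \eqref{AP_eta} becomes a problem purely in $\eta_i^\varepsilon$.

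For the $\theta$-step, I would recognise \eqref{AP_theta} as the Euler--Lagrange equation of
\[
J_\theta(\theta)=\frac{1}{2\tau}\int_\Omega\alpha_0(\cT_M\eta_{i-1}^\varepsilon)|\theta-\theta_{i-1}^\varepsilon|^2\,dx+\frac{\nu^2}{2\tau}|\nabla(\theta-\theta_{i-1}^\varepsilon)|_{[H]^N}^2+\int_\Omega\widetilde\alpha_M(\eta_{i-1}^\varepsilon)\gamma_\varepsilon(\nabla\theta)\,dx-(v_i,\theta)_H.
\]
By (A3) (in particular $\delta_\alpha>0$) and the strict convexity of $\gamma_\varepsilon$, the functional $J_\theta$ is strictly convex, coercive, and weakly lower semicontinuous on $V$; the direct method yields a unique minimiser $\theta_i^\varepsilon\in V$, and elliptic regularity (using $\gamma_\varepsilon\in C^\infty$ and $\eta_{i-1}^\varepsilon\in W_0$) lifts it to $W_0$. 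With $\theta_i^\varepsilon$ fixed, the analogous functional $J_\eta$ for the $\eta$-step contains the non-convex piece $\int\widetilde G_M(\eta)\,dx$, whose convexity defect is controlled by the Lipschitz constant $C_g$ of $g\circ\cT_M$ (depending only on $M$, by (A2)); choosing $\tau_0:=1/(4C_g)$ lets the quadratic $\frac{1}{2\tau}|\eta|_H^2$ dominate this defect, so $J_\eta$ is strictly convex and delivers a unique $\eta_i^\varepsilon\in W_0$ for every $\tau\in(0,\tau_0)$.

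The energy inequality \eqref{Energy1} follows by testing \eqref{AP_eta} against $\eta_i^\varepsilon-\eta_{i-1}^\varepsilon$, \eqref{AP_theta} against $\theta_i^\varepsilon-\theta_{i-1}^\varepsilon$, and summing. The Dirichlet part of $\F_\varepsilon$ arises from the elementary inequality $(\nabla\eta_i^\varepsilon,\nabla(\eta_i^\varepsilon-\eta_{i-1}^\varepsilon))_{[H]^N}\geq\tfrac12(|\nabla\eta_i^\varepsilon|_{[H]^N}^2-|\nabla\eta_{i-1}^\varepsilon|_{[H]^N}^2)$. The singular part $\int\widetilde\alpha_M(\eta)\gamma_\varepsilon(\nabla\theta)$ is reconstructed from the tangent-plane inequalities provided by convexity of $\widetilde\alpha_M$ (since $\widetilde\alpha_M'=\alpha'\circ\cT_M$ is non-decreasing) and of $\gamma_\varepsilon$:
\begin{gather*}
(\alpha'(\cT_M\eta_i^\varepsilon)\gamma_\varepsilon(\nabla\theta_i^\varepsilon),\eta_i^\varepsilon-\eta_{i-1}^\varepsilon)_H\geq\int_\Omega\bigl(\widetilde\alpha_M(\eta_i^\varepsilon)-\widetilde\alpha_M(\eta_{i-1}^\varepsilon)\bigr)\gamma_\varepsilon(\nabla\theta_i^\varepsilon)\,dx,\\
(\widetilde\alpha_M(\eta_{i-1}^\varepsilon)\nabla\gamma_\varepsilon(\nabla\theta_i^\varepsilon),\nabla(\theta_i^\varepsilon-\theta_{i-1}^\varepsilon))_{[H]^N}\geq\int_\Omega\widetilde\alpha_M(\eta_{i-1}^\varepsilon)\bigl(\gamma_\varepsilon(\nabla\theta_i^\varepsilon)-\gamma_\varepsilon(\nabla\theta_{i-1}^\varepsilon)\bigr)\,dx,
\end{gather*}
whose sum telescopes exactly to $\int\widetilde\alpha_M(\eta_i^\varepsilon)\gamma_\varepsilon(\nabla\theta_i^\varepsilon)-\int\widetilde\alpha_M(\eta_{i-1}^\varepsilon)\gamma_\varepsilon(\nabla\theta_{i-1}^\varepsilon)$, matching the singular contribution to $\F_\varepsilon(\eta_i^\varepsilon,\theta_i^\varepsilon)-\F_\varepsilon(\eta_{i-1}^\varepsilon,\theta_{i-1}^\varepsilon)$. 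For the $g$-piece a $C^{1,1}$ Taylor expansion yields $\int(\widetilde G_M(\eta_i^\varepsilon)-\widetilde G_M(\eta_{i-1}^\varepsilon))\,dx\leq(g(\cT_M\eta_i^\varepsilon),\eta_i^\varepsilon-\eta_{i-1}^\varepsilon)_H+C_g|\eta_i^\varepsilon-\eta_{i-1}^\varepsilon|_H^2$, and the residual $C_g|\cdot|_H^2$ is absorbed into $\frac{1}{\tau}|\eta_i^\varepsilon-\eta_{i-1}^\varepsilon|_H^2$ thanks to $\tau<\tau_0=1/(4C_g)$. Young's inequality on the forcings, weighted so that $(v_i,\theta_i^\varepsilon-\theta_{i-1}^\varepsilon)_H\leq\frac{\tau}{2\delta_\alpha}|v_i|_H^2+\frac{\delta_\alpha}{2\tau}|\theta_i^\varepsilon-\theta_{i-1}^\varepsilon|_H^2$, produces the exact constants appearing in \eqref{Energy1}.

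The principal obstacle I anticipate is the non-convex perturbation $\widetilde G_M$, which blocks both the full convexity needed for the variational solvability of $J_\eta$ and the clean telescoping used in the energy estimate. The truncation $\cT_M$ is the device that overcomes it: because $g\circ\cT_M$ is uniformly Lipschitz with constant $C_g$ depending only on $M$ (and not on $\varepsilon$ or on the iterates), a single threshold $\tau_0=1/(4C_g)$ simultaneously restores the strict convexity of $J_\eta$ and absorbs the Taylor remainder in \eqref{Energy1}, yielding an argument uniform in $\varepsilon\in(0,1)$ as the statement requires.
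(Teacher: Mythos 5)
Your proposal is correct and reaches the same energy inequality, but the $\eta$-step is handled by a genuinely different route, which is worth flagging. The paper does not convexify $J_\eta$; it freezes the $g$-nonlinearity (replacing $g(\cT_M\eta)$ by $g(\cT_M\eta^\dagger)$ for an auxiliary $\eta^\dagger$), minimizes the resulting functional $\Upsilon$ (which is convex with no smallness condition since $\widetilde\alpha_M$ is convex), and then runs a Banach fixed-point argument on the map $\eta^\dagger\mapsto\eta$, showing it is a contraction in $V$ precisely when $\tau<\tau_1$. Your alternative — keeping $\int\widetilde G_M(\eta)\,dx$ in the functional and noting that $\widetilde G_M+\tfrac{C_g}{2}r^2$ is convex, so $\tfrac{1}{2\tau}|z-\eta_{i-1}^\varepsilon|_H^2$ restores strict convexity of $J_\eta$ for $\tau<1/C_g$ — gives existence and uniqueness directly by the direct method, with a single smallness threshold that simultaneously serves the energy estimate. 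This is tighter and avoids the iteration entirely; the cost is nothing, since the paper already needs the $\tau$-smallness from the contraction constant anyway. Your Taylor bound for the $g$-piece (remainder $\tfrac{1}{2}C_g|\eta_i^\varepsilon-\eta_{i-1}^\varepsilon|_H^2$) is also sharper than the paper's two-step estimate, which produces $\tfrac{3}{2}C_g$ and hence requires $\tau_0\leq1/(6C_g)$ instead of your $1/(4C_g)$; either choice yields \eqref{Energy1} with the stated constants after the $\tfrac{1}{2\tau}$ from Young's inequality on $(u_i,\eta_i^\varepsilon-\eta_{i-1}^\varepsilon)_H$ is spent. One small thing to spell out: the direct method gives the minimizer $\eta_i^\varepsilon$ only in $V$; you invoke elliptic regularity to lift $\theta_i^\varepsilon$ to $W_0$ but then silently assert $\eta_i^\varepsilon\in W_0$. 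The same regularity step (rewriting the Euler--Lagrange equation as $(1+\tfrac{\mu^2}{\tau})A_N\eta_i^\varepsilon=\text{(an $H$-function)}$ and using $H^2$-regularity of the Neumann Laplacian) is needed there too and should be stated.
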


\Cref{AP} is proved through several lemmas.

\begin{lemma}\label{Lem_AP1}
  For arbitrary $\widetilde \theta \in V$, $\widetilde \eta_0 \in W_0$ and $\widetilde u \in H$, we consider the following elliptic problem:
  \begin{equation}
    \frac{1}{\tau} (\eta - \widetilde \eta_0) + A_N \left( \eta + \frac{\mu^2}{\tau}(\eta - \widetilde \eta_0) \right) + g(\cT_M \eta) + \alpha'(\cT_M \eta) \gamma_\varepsilon(\nabla \widetilde \theta) = \widetilde u, \mbox{ in } H. \label{AP1}
  \end{equation}
  Then, there exists a small constant $\tau_1 \in (0,1)$, depending only on $|g'|_{L^\infty(-M,M)}$, and for any $0 < \tau < \tau_1$, the elliptic problem \eqref{AP1} admits a unique solution $\eta \in W_0$.
\end{lemma}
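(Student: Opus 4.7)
The plan is to cast \eqref{AP1} as the Euler--Lagrange equation of a strictly convex variational problem on $V$. Let $\widehat{G}_M,\widehat{A}_M\in C^1(\R)$ be the primitives of $g\circ\cT_M$ and $\alpha'\circ\cT_M$ with $\widehat{G}_M(0)=\widehat{A}_M(0)=0$, and consider the functional $\mathcal{J}:V\to\R$ defined by
\begin{equation*}
\mathcal{J}(\eta) := \tfrac{1}{2\tau}|\eta-\widetilde{\eta}_0|_H^2 + \tfrac{1}{2}|\nabla\eta|_{[H]^N}^2 + \tfrac{\mu^2}{2\tau}|\nabla(\eta-\widetilde{\eta}_0)|_{[H]^N}^2 + \int_\Omega \widehat{G}_M(\eta)\,dx + \int_\Omega \widehat{A}_M(\eta)\,\gamma_\varepsilon(\nabla\widetilde{\theta})\,dx - (\widetilde{u},\eta)_H,
\end{equation*}
whose formal Gateaux variation reproduces the left-hand side of \eqref{AP1}.

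First I would show strict convexity of $\mathcal{J}$ for $\tau$ small. By (A3), $\alpha'\circ\cT_M$ is non-decreasing, so $\widehat{A}_M$ is convex; combined with $\gamma_\varepsilon(\nabla\widetilde{\theta})\geq 0$, the integral $\int_\Omega\widehat{A}_M(\eta)\gamma_\varepsilon(\nabla\widetilde{\theta})\,dx$ is convex in $\eta$. The $L^2$- and $H^1$-quadratic pieces are strongly convex. The only obstruction is $\int_\Omega \widehat{G}_M(\eta)\,dx$, which is merely semi-convex: since $g$ is Lipschitz on $[-M,M]$ with constant $L_g:=|g'|_{L^\infty(-M,M)}$, the map $r\mapsto g(\cT_M r)+L_g r$ is non-decreasing, whence $\widehat{G}_M+\tfrac{L_g}{2}(\cdot)^2$ is convex. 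Setting $\tau_1:=\min\{1/(2L_g),\,1/2\}$, the strongly convex quadratic $\tfrac{1}{2\tau}|\cdot|_H^2$ absorbs this defect for $\tau\in(0,\tau_1)$, so that $\mathcal{J}$ becomes strictly convex on $V$.

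Next I would verify coercivity and weak lower semi-continuity. Since $g\circ\cT_M$ and $\alpha'\circ\cT_M$ are bounded, $\widehat{G}_M$ and $\widehat{A}_M$ grow at most linearly; together with $\gamma_\varepsilon(\nabla\widetilde{\theta})\in L^2(\Omega)$ and $\widetilde u\in H$, these non-quadratic contributions are absorbed by the quadratic $V$-terms, giving $\mathcal{J}(\eta)\to\infty$ as $|\eta|_V\to\infty$. Convexity plus continuity on $V$ yields weak lower semi-continuity, and the direct method of the calculus of variations furnishes a unique minimizer $\eta\in V$. The identity $d\mathcal{J}(\eta)[\varphi]=0$ for all $\varphi\in V$ is precisely the weak form of \eqref{AP1}.

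Finally, I would upgrade $\eta\in V$ to $\eta\in W_0$ by elliptic regularity. Using $\widetilde{\eta}_0\in W_0$ to integrate $(\nabla\widetilde{\eta}_0,\nabla\varphi)_{[H]^N}$ by parts, the weak formulation reduces to a linear Neumann problem $-(1+\mu^2/\tau)\mathit{\Delta}\eta=F$ on $\Omega$, $\nabla\eta\cdot n_\Gamma=0$ on $\Gamma$, where $F\in H$ collects $\widetilde u$, $\tau^{-1}(\eta-\widetilde{\eta}_0)$, $(\mu^2/\tau)\mathit{\Delta}\widetilde{\eta}_0$ (admissible since $\widetilde{\eta}_0\in W_0$), and the bounded Nemytskii terms. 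Standard $H^2$-regularity on the smooth domain $\Omega$ (cf.\ \eqref{embb01}) then gives $\eta\in W_0$ and makes \eqref{AP1} hold as an equality in $H$. The main obstacle is the non-monotone perturbation $g(\cT_M\cdot)$; restoring strict convexity through the semi-convex/strong-$L^2$ trade-off is exactly what forces the smallness of $\tau$ and produces $\tau_1$ depending only on $L_g=|g'|_{L^\infty(-M,M)}$.
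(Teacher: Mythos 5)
Your proof is correct, but it takes a genuinely different route from the paper. The paper \emph{linearizes} the $g$-term: for a fixed $\eta^\dagger\in V$ it introduces a functional $\Upsilon$ in which $g(\cT_M\eta^\dagger)$ is frozen (so the $g$-contribution is merely \emph{linear} in the minimization variable), and its unique minimizer solves a linearized version of \eqref{AP1}. This defines a solution map $S_\tau:V\to H^2(\Omega)$, and Banach's fixed-point theorem is then applied after showing $S_\tau$ is a contraction on $V$ for $\tau$ small. You instead keep the full primitive $\widehat{G}_M$ inside a single functional $\mathcal J$ and exploit the semi-convexity of $\widehat{G}_M$ (with defect $\tfrac{L_g}{2}r^2$, $L_g=|g'|_{L^\infty(-M,M)}$), trading it against the strong $L^2$-convexity $\tfrac{1}{2\tau}|\cdot|_H^2$ to obtain strict convexity of $\mathcal J$ for $\tau$ small, whence the direct method produces the unique minimizer in one pass. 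Both are sound. Your route has a small but real advantage: it yields a $\tau_1$ depending \emph{only} on $|g'|_{L^\infty(-M,M)}$, exactly as the lemma asserts, whereas the paper's contraction is measured in the $V$-norm and produces $\tau_1=\bigl(\tfrac{1\wedge\mu^2}{|g'|_{L^\infty(-M,M)}^2}\bigr)^{1/2}$, which also depends on $\mu$ (the paper's \eqref{FP1} further writes $|g|$ where $|g'|$ is meant). Conversely, the fixed-point formulation is naturally iterative and so somewhat more amenable to direct numerical implementation. Your final $W_0$-upgrade via the Neumann elliptic problem with $H$-right-hand side is the same regularity mechanism the paper invokes, just applied at the level of the full Euler--Lagrange equation rather than the linearized one.
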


\begin{proof}
  First, for any $\eta^\dagger \in V$, we define a functional $\Upsilon: H \longrightarrow (-\infty, \infty]$ as follows:
  \begin{equation*}
    \Upsilon: z \in H \mapsto \Upsilon(z) := \left\{ \begin{aligned}
      &\frac{1}{2\tau} \int_\Omega |z|^2 \,dx + \frac{1}{2} \int_\Omega |\nabla z|^2 \,dx + \frac{\mu^2}{2\tau} \int_\Omega |\nabla (z - \widetilde \eta_0)|^2 \,dx
      \\
      &\quad + \int_\Omega \widetilde g(\cT_M \eta^\dagger) z \,dx + \int_\Omega \widetilde \alpha_M(z) \gamma_\varepsilon(\nabla \widetilde \theta) \,dx
      \\
      &\quad - \int_\Omega \widetilde u z \,dx, \ \mbox{ if } z \in V,
      \\
      &\infty, \mbox{ otherwise}.
    \end{aligned} \right.
  \end{equation*}
  As is easily checked, $\Upsilon$ is proper, l.s.c., strictly convex, and coercive, and its unique minimizer solves the following elliptic equation:
  \begin{equation}
    \frac{1}{\tau} (\eta - \widetilde \eta_0) + A_N \left( \eta + \frac{\mu^2}{\tau}(\eta - \widetilde \eta_0) \right) + g(\cT_M \eta^\dagger) + \alpha'(\cT_M \eta) \gamma_\varepsilon(\nabla \widetilde \theta) = \widetilde u, \mbox{ in } H. \label{AP1.1}
  \end{equation}

  Now, we define an operator $S_\tau:V \longrightarrow H^2(\Omega)$ which maps any $\eta^\dagger \in V$ to the unique solution to \eqref{AP1.1}, and consider the smallness condition of $\tau$ for $S$ to be contractive. Here, let $\eta_k := S_\tau \eta_k^\dagger \in H^2(\Omega)$, $k=1,2$. By taking differences of \eqref{AP1.1}, multiplying both sides by $\eta_1 - \eta_2$ and applying Young's inequality, we see from (A1) and (A2) that:
  \begin{gather*}
    \frac{1 \wedge \mu^2}{2\tau}|\eta_1 - \eta_2|_V^2 \leq \frac{\tau|g'|_{L^\infty(-M,M)}^2}{2}|\eta_1^\dagger - \eta_2^\dagger|_H^2.
  \end{gather*}
  Therefore, if we assume that
  \begin{equation}
    0 < \tau < \tau_1 := \left( \frac{1 \wedge \mu^2}{|g|_{L^\infty(-M,M)}^2} \right)^{\frac{1}{2}}, \label{FP1}
  \end{equation}
  then the mapping $S_\tau$ becomes a contraction mapping from $V$ into itself. Therefore, applying Banach's fixed point theorem, we find a unique fixed point $\widetilde \eta \in V$ of $S_\tau$ under the condition \eqref{FP1}. The identity $S_\tau \widetilde \eta = \widetilde \eta$ implies that $\widetilde \eta$ is the unique solution to \eqref{AP1}. 
\end{proof}

\begin{lemma}\label{Lem_AP2}
  For arbitrary $\widetilde \eta \in H^2(\Omega)$, $\widetilde \theta_0 \in W_0$ and $\widetilde v \in H$, we consider the following elliptic equation:
  \begin{equation}
    \left\{ \begin{aligned}
      &\alpha_0 (\cT_M \widetilde \eta) \frac{\theta - \widetilde \theta_0}{\tau} - \diver \left( \widetilde \alpha_M (\widetilde \eta) \nabla \gamma_\varepsilon(\nabla \theta) + \frac{\nu^2}{\tau}\nabla(\theta - \widetilde \theta_0) \right) = \widetilde v \mbox{ a.e. in } \Omega,
      \\
      &\nabla \theta \cdot n_\Gamma = 0 \mbox{ a.e. on } \Gamma.
    \end{aligned} \right. \label{AP2}
  \end{equation}
  Then, for any $0 < \tau < 1$, \eqref{AP2} admits a unique solution $\theta \in W_0$.
\end{lemma}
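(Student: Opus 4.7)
The plan is to obtain $\theta$ as the unique minimizer on $V$ of the convex functional
\begin{gather*}
    \Psi(z) := \frac{1}{2\tau} \int_\Omega \alpha_0(\cT_M \widetilde\eta) |z - \widetilde\theta_0|^2 \, dx + \int_\Omega \widetilde\alpha_M(\widetilde\eta) \gamma_\varepsilon(\nabla z) \, dx \\
    + \frac{\nu^2}{2\tau} \int_\Omega |\nabla (z - \widetilde\theta_0)|^2 \, dx - \int_\Omega \widetilde v \, z \, dx, \ \mbox{ for } z \in V,
\end{gather*}
extended to $H$ by $+\infty$ off $V$. This is the natural energy whose Euler--Lagrange equation reproduces \eqref{AP2}, so minimizing $\Psi$ and then upgrading the minimizer via elliptic regularity will deliver both existence and uniqueness of a solution in $W_0$.

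I would first verify that $\Psi$ is proper (finite at $\widetilde\theta_0 \in W_0 \subset V$), strictly convex, coercive on $V$, and lower semicontinuous on $H$. Convexity of $\gamma_\varepsilon$ together with nonnegativity of $\widetilde\alpha_M$ makes the gradient term convex; strict convexity of the full functional then comes from the two quadratic terms, using $\alpha_0(\cT_M\widetilde\eta) \geq \delta_\alpha > 0$ (cf.\ (A3)) and $\nu > 0$ (cf.\ (A1)). These same two lower bounds, together with the Cauchy--Schwarz bound on the linear $\widetilde v$-term, yield coercivity on $V$. By the direct method of the calculus of variations a unique minimizer $\theta \in V$ exists, and differentiating $s \mapsto \Psi(\theta + s\varphi)$ at $s = 0$ produces the weak form
\begin{gather*}
    \int_\Omega \alpha_0(\cT_M \widetilde\eta) \frac{\theta - \widetilde\theta_0}{\tau} \varphi \, dx \\
    + \int_\Omega \Bigl( \widetilde\alpha_M(\widetilde\eta) \frac{\nabla\theta}{\sqrt{\varepsilon^2 + |\nabla\theta|^2}} + \frac{\nu^2}{\tau}\nabla(\theta - \widetilde\theta_0) \Bigr) \cdot \nabla \varphi \, dx = \int_\Omega \widetilde v \, \varphi \, dx
\end{gather*}
for every $\varphi \in V$.

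What remains is to upgrade $\theta \in V$ to $\theta \in W_0$. Rewriting the weak form, $\theta$ solves a uniformly elliptic quasilinear Neumann problem
\begin{equation*}
    -\diver A(x, \nabla \theta) + \frac{\alpha_0(\cT_M\widetilde\eta)}{\tau}\, \theta = \widetilde f, \quad A(x,p) := \widetilde\alpha_M(\widetilde\eta(x)) \frac{p}{\sqrt{\varepsilon^2 + |p|^2}} + \frac{\nu^2}{\tau} p,
\end{equation*}
with zero-Neumann boundary condition, the right-hand side $\widetilde f \in H$ being built from $\widetilde v$, $\alpha_0(\cT_M\widetilde\eta)\widetilde\theta_0/\tau$, and $-\frac{\nu^2}{\tau}\Lap\widetilde\theta_0$ (admissible since $\widetilde\theta_0 \in W_0$ with $\nabla\widetilde\theta_0 \cdot n_\Gamma = 0$). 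The Jacobian $\partial_p A$ has smallest eigenvalue at least $\nu^2/\tau > 0$ independently of $\widetilde\alpha_M \geq 0$; $A$ is $C^\infty$ in $p$; and the $x$-dependence of $A$ is through $\widetilde\eta \in H^2(\Omega) \hookrightarrow W^{1,\infty}(\Omega)$, which is valid for $N \leq 3$.

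The main obstacle will be this final regularity step, since the flux is genuinely nonlinear in $\nabla\theta$; however, the uniform $\nu^2/\tau$-ellipticity brought by the pseudo-parabolic Laplacian (and independent of $\varepsilon$) together with the smoothness of $\gamma_\varepsilon$ places the problem within classical quasilinear elliptic theory, so standard Nirenberg difference-quotient estimates up to the $C^\infty$-boundary yield $\theta \in H^2(\Omega)$ together with $\nabla\theta \cdot n_\Gamma = 0$ a.e.\ on $\Gamma$, i.e.\ $\theta \in W_0$. Uniqueness has already been secured by the strict convexity of $\Psi$.
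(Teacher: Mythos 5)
Your variational setup is exactly the paper's: minimize the same proper, l.s.c., strictly convex, coercive functional $\Upsilon_*$ over $H$, invoke the direct method to get a unique minimizer in $V$, and read off the Euler--Lagrange equation. At that point the paper simply cites \cite[Theorem 1]{aiki2023class} for the fact that the minimizer lies in $W_0$ and solves \eqref{AP2} pointwise, while you try to supply the $W_0$-regularity yourself. That is the step where your argument goes wrong.

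The flaw is your assertion that $\widetilde\eta \in H^2(\Omega) \hookrightarrow W^{1,\infty}(\Omega)$ for $N \leq 3$. This is false for $N=2$ and $N=3$: by Sobolev embedding $H^2(\Omega)$ lands in $C^{0,\alpha}(\overline\Omega)$ (so $\widetilde\alpha_M(\widetilde\eta) \in L^\infty$) and in $W^{1,6}(\Omega)$, but not in $W^{1,\infty}(\Omega)$. Consequently the $x$-dependence of your flux $A(x,p)$ is only Lipschitz-in-$p$ with an $L^\infty \cap W^{1,6}$ (not Lipschitz) coefficient, so the regularity question is \emph{not} a textbook application of difference quotients for quasilinear elliptic equations with smooth coefficients. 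This is precisely why the paper delegates the step to \cite[Theorem 1]{aiki2023class} and, elsewhere, imports the technical estimate \cref{SD_part} (= \cite[Lemma 3.2]{aiki2023class}), which is tailored to a coefficient $\alpha^\circ \in L^\infty(\Omega)\cap V$ and exploits the uniformly elliptic $\frac{\nu^2}{\tau}\Delta$ term together with the boundedness of $\nabla\gamma_\varepsilon$. If you want to give a self-contained proof rather than cite, you need to replace the "standard Nirenberg estimates" claim with an argument in that spirit — for example, test with $-\Delta\theta$, use that $|\nabla\gamma_\varepsilon|\leq 1$ and $\nabla^2\gamma_\varepsilon \geq 0$, and absorb the lower-order contributions of $\nabla\widetilde\alpha_M(\widetilde\eta) \in L^6$ via interpolation, then conclude $\theta\in W_0$ from \eqref{embb01} and elliptic regularity for the Neumann Laplacian. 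As written, the regularity step is a genuine gap.
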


\begin{proof}
  Let us consider a proper, l.s.c., strictly convex, and coercive function $\Upsilon_*$ defined as follows:
  \begin{equation*}
    \Upsilon_* : z \in H \mapsto \Upsilon_*(z) := \left\{ \begin{aligned}
      &\frac{1}{2\tau} \int_\Omega \alpha_0(\cT_M \widetilde \eta) |z - \widetilde \theta_0|^2\,dx + \int_\Omega \widetilde \alpha_M(\widetilde \eta) \gamma_\varepsilon(\nabla z) \,dx
      \\
      &\quad + \frac{\nu^2}{2\tau} \int_\Omega |\nabla (z - \widetilde \theta_0)|^2\,dx - \int_\Omega \widetilde v z\,dx, \ \mbox{ if } z \in V,
      \\
      &\infty, \mbox{ otherwise. }
    \end{aligned} \right.
  \end{equation*}
  As is discussed in \cite[Theorem 1]{aiki2023class}, the unique minimizer $\widetilde \theta$ of $\Upsilon_*$ solves \eqref{AP2}, and $\widetilde \theta$ belongs to $W_0$.
\end{proof}

\begin{proof}[Proof of \cref{AP}]
  Let us fix any $\tau \in (0,\tau_1)$ and any $\varepsilon \in (0,1)$. Then, for any $i \in \N$, we can obtain $\theta_i^\varepsilon \in W_0$ by applying \cref{Lem_AP2} in the case that:
  \begin{equation*}
    \widetilde \eta = \eta_{i-1}^\varepsilon, \ \widetilde \theta_0 = \theta_{i-1}^\varepsilon \mbox{ and } \widetilde v = v_i \mbox{ in } H.
  \end{equation*}
  Moreover, for any $i\in \N$, the component $\eta_i^\varepsilon \in W_0$ can be obtained by applying \cref{Lem_AP1} in the case that:
  \begin{equation*}
    \widetilde \theta = \theta_i^\varepsilon, \ \widetilde \eta_0 := \eta_{i-1}^\varepsilon \mbox{ and } \widetilde u = u_i \mbox{ in } H.
  \end{equation*}
  Thus we can find the unique solution $\{ [\eta_i^\varepsilon, \theta_i^\varepsilon] \}_{i=0}^\infty \subset [W_0]^2$ to (AP)$_\tau^\varepsilon$.

  Next, we verify the inequality \eqref{Energy1}. Multiplying both sides of \eqref{AP_eta} with $\eta_i^\varepsilon - \eta_{i-1}^\varepsilon$, we see that:
\begin{gather}
    \frac{1}{2\tau}|\eta_i^\varepsilon - \eta_{i-1}^\varepsilon|_H^2 + \frac{\mu^2}{\tau} |\nabla (\eta_i^\varepsilon - \eta_{i-1}^\varepsilon)|_{[H]^N}^2 + \frac{1}{2}|\nabla \eta_i^\varepsilon|_{[H]^N}^2 - \frac{1}{2}|\nabla \eta_{i-1}^\varepsilon|_{[H]^N}^2 
      \label{Energy2}
    \\
    + \bigl( g(\cT_M \eta_i^\varepsilon), \eta_i^\varepsilon - \eta_{i-1}^\varepsilon \bigr)_H + \bigl( \alpha'(\cT_M \eta_i^\varepsilon) \gamma_\varepsilon(\nabla \theta_i^\varepsilon), \eta_i^\varepsilon - \eta_{i-1}^\varepsilon \bigr)_H \leq \frac{\tau}{2}|u_i|_H^2, 
    \nonumber
    \\
    \mbox{for } i = 1,2,3,\dots.
    \nonumber
  \end{gather}
  via the following computations:
  \begin{gather*}
    (\nabla \eta_i^\varepsilon, \nabla (\eta_i^\varepsilon - \eta_{i-1}^\varepsilon))_{[H]^N} \geq \frac{1}{2}\bigl( |\nabla \eta_i^\varepsilon|_{[H]^N}^2 - |\nabla \eta_{i-1}^\varepsilon|_{[H]^N}^2 \bigr),
    \\
    \frac{\mu^2}{\tau}(\nabla(\eta_i^\varepsilon - \eta_{i-1}^\varepsilon), \nabla(\eta_i^\varepsilon - \eta_{i-1}^\varepsilon))_{[H]^N} = \frac{\mu^2}{\tau}|\nabla (\eta_i^\varepsilon - \eta_{i-1}^\varepsilon)|_{[H]^N}^2,
  \end{gather*}
  and
  \begin{gather*}
    (u_i, \eta_i^\varepsilon - \eta_{i-1}^\varepsilon)_H \leq \frac{1}{2\tau}|\eta_i^\varepsilon - \eta_{i-1}^\varepsilon|_H^2 + \frac{\tau}{2}|u_i|_H^2.
  \end{gather*}
  In addition, by using (A1), it is obtained that:
  \begin{gather}
    \label{Energy3}
      \bigl( g(\cT_M \eta_i^\varepsilon), \eta_i^\varepsilon - \eta_{i-1}^\varepsilon \bigr)_H \geq \int_\Omega \widetilde G_M(\eta_i^\varepsilon)\,dx - \int_\Omega \widetilde G_M(\eta_{i-1}^\varepsilon)\,dx 
    \\
      \nonumber
    + \bigl( g(\cT_M \eta_i^\varepsilon) - g(\cT_M \eta_{i-1}^\varepsilon), \eta_i^\varepsilon - \eta_{i-1}^\varepsilon \bigr)_H - \frac{1}{2}|g'|_{L^\infty(-M,M)}|\eta_i^\varepsilon - \eta_{i-1}^\varepsilon|_H^2
    \\
      \nonumber
    \geq \int_\Omega \widetilde G_M(\eta_i^\varepsilon)\,dx - \int_\Omega \widetilde G_M(\eta_{i-1}^\varepsilon)\,dx - \frac{3}{2}|g'|_{L^\infty(-M,M)}|\eta_i^\varepsilon - \eta_{i-1}^\varepsilon|_H^2,
    \\
      \nonumber
    \mbox{ for } i = 1,2,3,\dots,
  \end{gather}
  and by the convexity of $\widetilde \alpha_M$,
  \begin{gather}
    \bigl( \alpha'(\cT_M \eta_i^\varepsilon) \gamma_\varepsilon(\nabla \theta_i^\varepsilon), \eta_i^\varepsilon - \eta_{i-1}^\varepsilon \bigr)_H \geq \int_\Omega \widetilde \alpha_M (\eta_i^\varepsilon) \gamma_\varepsilon(\nabla \theta_i^\varepsilon) \,dx
      \label{Energy4}
    \\
     - \int_\Omega \widetilde \alpha_M(\eta_{i-1}^\varepsilon) \gamma_\varepsilon(\nabla \theta_i^\varepsilon)\,dx,
    \mbox{ for } i = 1,2,3,\dots.
      \nonumber
  \end{gather}
  On account of \eqref{Energy2}--\eqref{Energy4}, it is inferred that:
  \begin{gather}
    \left( \frac{1}{2} - \frac{3\tau}{2} |g'|_{L^\infty(-M,M)} \right)\frac{1}{\tau}|\eta_i^\varepsilon - \eta_{i-1}^\varepsilon|_H^2 + \frac{\mu^2}{\tau} |\nabla (\eta_i^\varepsilon - \eta_{i-1}^\varepsilon)|_{[H]^N}^2 
      \label{Energy5}
    \\
    + \frac{1}{2}|\nabla \eta_i^\varepsilon|_{[H]^N}^2 + \int_\Omega \widetilde G_M(\eta_i^\varepsilon) \,dx + \int_\Omega \widetilde \alpha_M(\eta_i^\varepsilon) \gamma_\varepsilon(\nabla \theta_i^\varepsilon)\,dx 
      \nonumber
    \\
    \leq \frac{1}{2}|\nabla\eta_{i-1}^\varepsilon|_{[H]^N}^2 + \int_\Omega \widetilde G_M(\eta_{i-1}^\varepsilon)\,dx + \int_\Omega \widetilde \alpha_M(\eta_{i-1}^\varepsilon) \gamma_\varepsilon(\nabla \theta_i^\varepsilon)\,dx + \frac{\tau}{2}|u_i|_H^2, 
      \nonumber
    \\
    \mbox{ for } i = 1,2,3,\dots.
      \nonumber
  \end{gather}

  On the other hand, by multiplying both sides of \eqref{AP_theta} by $\theta_i^\varepsilon - \theta_{i-1}^\varepsilon$, and using (A3) and the convexity of $\gamma_\varepsilon$, we have
  \begin{align}
    &\frac{\delta_{\alpha}}{2\tau} |\theta_i^\varepsilon - \theta_{i-1}^\varepsilon|_H^2 + \frac{\nu^2}{\tau} |\nabla(\theta_i^\varepsilon - \theta_{i-1}^\varepsilon)|_{[H]^N}^2 + \int_\Omega \widetilde \alpha_M(\eta_{i-1}^\varepsilon) \gamma_\varepsilon(\nabla \theta_i^\varepsilon)\,dx \label{Energy6}
    \\
    & \leq \int_\Omega \widetilde \alpha_M(\eta_{i-1}^\varepsilon) \gamma_\varepsilon(\nabla \theta_{i-1}^\varepsilon)\,dx + \frac{\tau}{2\delta_{\alpha}}|v_i|_H^2, \ \mbox{ for } i = 1,2,3,\dots,
      \nonumber
  \end{align}
  via the following computation:
  \begin{gather*}
    \frac{1}{\tau}(\alpha_0(\cT_M\eta_{i-1}) (\theta_i^\varepsilon - \theta_{i-1}^\varepsilon), \theta_i^\varepsilon - \theta_{i-1}^\varepsilon)_H \geq \frac{\delta_{\alpha}}{\tau} |\theta_i^\varepsilon - \theta_{i-1}^\varepsilon|_H^2,
  \end{gather*}
  \begin{gather*}
    (\widetilde \alpha_M(\eta_{i-1}^\varepsilon) \nabla \gamma_\varepsilon(\nabla\theta_i^\varepsilon), \nabla( \theta_i^\varepsilon - \theta_{i-1}^\varepsilon))_{[H]^N} \qquad\qquad
    \\
    \qquad\qquad\qquad\geq \int_\Omega \widetilde \alpha_M(\eta_{i-1}^\varepsilon) \gamma_\varepsilon(\nabla \theta_i^\varepsilon)\,dx - \int_\Omega \widetilde \alpha_M(\eta_{i-1}^\varepsilon) \gamma_\varepsilon(\nabla \theta_{i-1}^\varepsilon) \,dx,
  \end{gather*}
  and
  \begin{gather*}
    (u_i, \theta_i^\varepsilon - \theta_{i-1}^\varepsilon)_H \leq \frac{\delta_{\alpha}}{2\tau}|\theta_i^\varepsilon - \theta_{i-1}^\varepsilon|_H^2 + \frac{\tau}{2\delta_{\alpha}}|u_i|_H^2.
  \end{gather*}
  
  Now, let us set $\tau_0$ as 
  \begin{gather*}
    \tau_0 := \min\left\{ \tau_1, \frac{1}{6|g'|_{L^\infty(-M,M)}} \right\}, 
     \mbox{ with the constant $\tau_1$ as in \cref{Lem_AP1}.}
  \end{gather*}
  Then, from \eqref{Energy5} and \eqref{Energy6}, we obtain that:
  \begin{align*}
    &\frac{1}{4\tau} |\eta_i^\varepsilon - \eta_{i-1}^\varepsilon|_H^2 + \frac{\mu^2}{\tau} |\nabla (\eta_i^\varepsilon - \eta_{i-1}^\varepsilon) |_{[H]^N}^2 + \frac{\delta_{\alpha}}{2\tau} |\theta_i^\varepsilon - \theta_{i-1}^\varepsilon|_H^2
    \\
    &\quad + \frac{\nu^2}{\tau} |\nabla (\theta_i^\varepsilon - \theta_{i-1}^\varepsilon)|_{[H]^N}^2 + \F_\varepsilon(\eta_i^\varepsilon, \theta_i^\varepsilon) 
    \\
    &\quad \leq \F_\varepsilon(\eta_{i-1}^\varepsilon, \theta_{i-1}^\varepsilon) + \frac{\tau}{2}|u_i|_H^2 + \frac{\tau}{2\delta_{\alpha}}|v_i|_H^2, \ \mbox{ for } i =  1,2,3,\dots.
  \end{align*}
  
  Thus, we conclude \cref{AP}.  
\end{proof}

\section{Proofs of Main Theorems}

In this section, we will provide proofs of Main Theorems. We set 
\begin{equation}\label{n_tau}
    n_\tau := \min \{ n \in \N \,|\, n \tau \geq T \}.
\end{equation}
Additionally, under the notations as in \cref{AP}, we invoke \eqref{tI01} and \eqref{tI02}, and take a small constant $\tau_* \in (0,\tau_0)$, such that:
\begin{equation*}
  \tau \sum_{i=1}^{n_\tau} \bigl( |u_i|_H^2 + |v_i|_H^2 \bigr) \leq |u|_\sH^2 + |v|_\sH^2 + 1, \mbox{ whenever } \tau \in (0,\tau_*).
\end{equation*}

\subsection{Proof of Main Theorem 1}

Before we deal with the proof, we will prepare some lemmas. Hereafter, based on (A1), (A3), (A4) and (A5), we set the constant $M > 0$ of truncation, so large to satisfy that:
\begin{equation}
  M \geq |\eta_0|_{L^\infty(\Omega)}, \ g(M) \geq |u|_{L^\infty(Q)}, \mbox{ and } g(-M) \leq -|u|_{L^\infty(Q)}. \label{ken02}
\end{equation}
Then, it immediately follows that:
\begin{equation}
  \widetilde G_M (\eta_0) = G(\eta_0), \mbox{ and } \widetilde \alpha_M (\eta_0) = \alpha(\eta_0). \label{dm02}
\end{equation}

\begin{lemma}\label{Lem_eta}
  Let $\tau \in (0,\tau_*)$. Then, there exists a constant $C_2 > 0$, independent of $\varepsilon$ and $ \tau$, such that:
  \begin{gather}
    \frac{1}{\tau}\sum_{i=1}^{n_\tau} |\eta_i^\varepsilon - \eta_{i-1}^\varepsilon|_{H^2(\Omega)}^2 \leq C_2 \bigl( |\eta_0|_{H^2(\Omega)}^2 + |\theta_0|_V^2 + |u|_\sH^2 + |v|_\sH^2 + 1 \bigr). 
      \label{H2_eta1}
  \end{gather}
\end{lemma}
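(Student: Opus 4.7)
The plan is to test the $\eta$-equation \eqref{AP_eta} with $A_N(\eta_i^\varepsilon - \eta_{i-1}^\varepsilon)$ in $H$, so that the Laplacian term produces both a telescoping piece controlling $|A_N \eta_i^\varepsilon|_H^2$ and a coercive piece $\frac{\mu^2}{\tau}|A_N(\eta_i^\varepsilon - \eta_{i-1}^\varepsilon)|_H^2$. Using $(z, A_N z)_H = |\nabla z|_{[H]^N}^2$ for $z \in W_0$, the discrete time-derivative contributes a nonnegative $\frac{1}{\tau}|\nabla(\eta_i^\varepsilon - \eta_{i-1}^\varepsilon)|_{[H]^N}^2$ that I will simply drop, and the linear-symmetric identity $(A_N\eta_i^\varepsilon, A_N(\eta_i^\varepsilon - \eta_{i-1}^\varepsilon))_H \geq \tfrac{1}{2}\bigl(|A_N\eta_i^\varepsilon|_H^2 - |A_N\eta_{i-1}^\varepsilon|_H^2\bigr)$ will give the telescope.

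The nonlinear/forcing terms on the right-hand side are estimated in $H$ and absorbed by Young's inequality into $\frac{\mu^2}{\tau}|A_N(\eta_i^\varepsilon-\eta_{i-1}^\varepsilon)|_H^2$ with small constants. Specifically, $|g(\cT_M\eta_i^\varepsilon)|_H$ and $|\alpha'(\cT_M\eta_i^\varepsilon)|_{L^\infty(\Omega)}$ are bounded uniformly in $i,\varepsilon$ by $|g|_{L^\infty(-M,M)}$ and $|\alpha'|_{L^\infty(-M,M)}$ respectively, because of truncation; the forcing $|u_i|_H$ is summable with factor $\tau$ by the choice of $\tau_*$. The term $\gamma_\varepsilon(\nabla\theta_i^\varepsilon)$ is the slightly subtle one: I will need a uniform bound for $|\nabla\theta_i^\varepsilon|_{[H]^N}$, which I will obtain as a preliminary step from \eqref{Energy1}. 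Indeed, summing \eqref{Energy1} yields $\sum_{i=1}^{n_\tau}\frac{\nu^2}{\tau}|\nabla(\theta_i^\varepsilon-\theta_{i-1}^\varepsilon)|_{[H]^N}^2 \leq \mathcal{F}_\varepsilon(\eta_0,\theta_0) + \tfrac{1}{2}|u|_{\sH}^2 + \tfrac{1}{2\delta_\alpha}|v|_{\sH}^2 + 1$, and since $\mathcal{F}_\varepsilon(\eta_0,\theta_0)$ is bounded by initial data using \eqref{dm02}, the Sobolev embedding $H^2(\Omega) \hookrightarrow L^\infty(\Omega)$ for $N\leq 3$, and $\gamma_\varepsilon(\nabla\theta_0) \leq \varepsilon + |\nabla\theta_0|$, I get via the telescoping bound $|\nabla \theta_i^\varepsilon|_{[H]^N} \leq |\nabla \theta_0|_{[H]^N} + \sqrt{T}\bigl(\sum_j \tfrac{1}{\tau}|\nabla(\theta_j^\varepsilon-\theta_{j-1}^\varepsilon)|_{[H]^N}^2\bigr)^{1/2}$ a uniform bound $|\nabla\theta_i^\varepsilon|_{[H]^N} \leq C(|\eta_0|_{H^2}, |\theta_0|_V, |u|_{\sH}, |v|_{\sH})$, which in turn gives a uniform $H$-bound on $\alpha'(\cT_M\eta_i^\varepsilon)\gamma_\varepsilon(\nabla\theta_i^\varepsilon)$.

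Putting these together, one arrives at an inequality of the form
\begin{equation*}
\tfrac{\mu^2}{4\tau}|A_N(\eta_i^\varepsilon-\eta_{i-1}^\varepsilon)|_H^2 + \tfrac{1}{2}\bigl(|A_N\eta_i^\varepsilon|_H^2 - |A_N\eta_{i-1}^\varepsilon|_H^2\bigr) \leq C\tau\bigl(1 + |u_i|_H^2\bigr),
\end{equation*}
with $C$ depending only on $\mu$, $M$, $\Omega$, and the uniform gradient bound just derived. Summing from $i=1$ to $i = n_\tau$, the telescope contributes $\tfrac{1}{2}|A_N\eta_0|_H^2 \leq C|\eta_0|_{H^2(\Omega)}^2$, and the choice of $\tau_*$ controls $\sum \tau|u_i|_H^2 \leq |u|_{\sH}^2 + 1$. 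I thus get
\begin{equation*}
\tfrac{1}{\tau}\sum_{i=1}^{n_\tau}|A_N(\eta_i^\varepsilon-\eta_{i-1}^\varepsilon)|_H^2 \leq C\bigl(|\eta_0|_{H^2(\Omega)}^2 + |\theta_0|_V^2 + |u|_{\sH}^2 + |v|_{\sH}^2 + 1\bigr).
\end{equation*}
Combining this with the $H$-bound $\sum\tfrac{1}{\tau}|\eta_i^\varepsilon-\eta_{i-1}^\varepsilon|_H^2$ already furnished by \eqref{Energy1}, the elliptic regularity estimate \eqref{embb01} applied to $\eta_i^\varepsilon - \eta_{i-1}^\varepsilon \in W_0$ converts $|A_N\cdot|_H^2 + |\cdot|_H^2$ into the desired $|\cdot|_{H^2(\Omega)}^2$, completing \eqref{H2_eta1}.

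The main obstacle is the $\alpha'\gamma_\varepsilon(\nabla\theta_i^\varepsilon)$ term, since it ties the $H^2$-regularity of $\eta$ to a priori control of $\nabla\theta$; hence the argument has to be carried out in the correct order, first extracting the uniform bound on $|\nabla\theta_i^\varepsilon|_{[H]^N}$ from the summed energy inequality \eqref{Energy1} (which is $\varepsilon$- and $\tau$-independent thanks to \eqref{dm02} and the $H^2 \hookrightarrow L^\infty$ embedding in dimension $N\leq 3$), and only then performing the $A_N$-test on the $\eta$-equation.
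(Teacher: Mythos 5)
Your proposal is correct and follows essentially the same route as the paper's proof: first derive a uniform bound on $|\theta_i^\varepsilon|_V$ from summing \eqref{Energy1} (with $\sup_\varepsilon\F_\varepsilon(\eta_0,\theta_0)<\infty$ via \eqref{dm02} and $H^2\hookrightarrow L^\infty$), then test \eqref{AP_eta} with $A_N(\eta_i^\varepsilon-\eta_{i-1}^\varepsilon)=-\Lap(\eta_i^\varepsilon-\eta_{i-1}^\varepsilon)$ to produce a telescope in $|\Lap\eta_i^\varepsilon|_H^2$ and a coercive $\frac{\mu^2}{\tau}|\Lap(\eta_i^\varepsilon-\eta_{i-1}^\varepsilon)|_H^2$, absorb the truncation-bounded $g$-, $\alpha'\gamma_\varepsilon$-, and $u_i$-terms by Young, sum, and convert to the $H^2$-norm via \eqref{embb01} and the $H$-bound already given by \eqref{Energy1}. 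The ordering of steps (preliminary gradient bound on $\theta$, then the $A_N$ test on the $\eta$-equation) and the treatment of each term match the paper's argument.
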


\begin{proof}
  First, from the definition of $\F_\varepsilon$ \eqref{free_eng}, \eqref{dm02}, and the embedding $W_0 \subset L^\infty(\Omega)$ under $ N \leq 3 $, it is seen that:
  \begin{align*}
    \F_\varepsilon(\eta_0, \theta_0) &= \frac{1}{2} \int_\Omega |\nabla \eta_0|^2 \,dx + \int_\Omega \widetilde G_M(\eta_0)\,dx + \int_\Omega \widetilde \alpha_M(\eta_0) \gamma_\varepsilon(\nabla \theta_0) \,dx
    \\
    &= \frac{1}{2} \int_\Omega |\nabla \eta_0|^2 \,dx + \int_\Omega G(\eta_0)\,dx + \int_\Omega \alpha(\eta_0) \gamma_\varepsilon(\nabla \theta_0) \,dx
    \\
    &\leq \frac{1}{2}|\eta_0|_V^2 + |G(\eta_0)|_{L^1(\Omega)} + |\alpha(\eta_0)|_{L^2(\Omega)}^2 + \L^N(\Omega) + |\theta_0|_V^2.
  \end{align*}
  Hence, 
  \begin{equation*}
    C_F := \sup_{\varepsilon \in (0,1)} \F_\varepsilon(\eta_0, \theta_0) < \infty.
  \end{equation*}
  Also, from \eqref{Energy1},\eqref{n_tau}, \cref{AP}, and H\"{o}lder's inequality, it is observed that:
  \begin{align}
    & |\theta_i^\varepsilon|_V^2 \leq 2|\theta_0|_V^2 + 2 \left( \sum_{i = 1}^{n_\tau} |\theta_i^\varepsilon - \theta_{i-1}^\varepsilon|_V \right)^2 
      \label{H1_theta}
    \\
      \leq ~& 2|\theta_0|_V^2 + 2(T + 1) \sum_{i = 1}^{n_\tau} \frac{1}{\tau}|\theta_i^\varepsilon - \theta_{i-1}^\varepsilon|_V^2 
    \nonumber
      \\
      \leq~ & 2|\theta_0|_V^2 + \frac{4(T + 1)}{\delta_{\alpha} \wedge \nu^2} \left( \F_\varepsilon(\eta_0, \theta_0) + \frac{1}{2(1 \wedge \delta_\alpha)}\bigl( |u|_\sH^2 + |v|_\sH^2 + 1 \bigr)  \right)
    \nonumber
    \\
      \leq ~&  \frac{4 (C_F + 1)(T + 1)}{1 \wedge \delta_{\alpha}^2 \wedge \nu^4}\bigl( |\theta_0|_V^2 + |u|_\sH^2 + |v|_\sH^2 + 1 \bigr), \ \mbox{ for any } i = 1,2,3,\dots, n_\tau.
    \nonumber
  \end{align}

  Next, we verify the estimate \eqref{H2_eta1}. Multiplying the both side of \eqref{AP_eta} by $- \Lap (\eta_i^\varepsilon - \eta_{i-1}^\varepsilon)$ and applying Young's inequality, it can be seen that:
  \begin{align}
    &\frac{3\mu^2}{4\tau} |\Lap (\eta_i^\varepsilon - \eta_{i-1}^\varepsilon)|_H^2 \leq \frac{1}{2}\bigl( |\Lap \eta_{i-1}^\varepsilon|_H^2 - |\Lap \eta_i^\varepsilon|_H^2 \bigr) 
      \label{H2_eta2}
    \\
    &\quad + (g(\cT_M \eta_i^\varepsilon), \Lap (\eta_i^\varepsilon - \eta_{i-1}^\varepsilon))_H + (\alpha'(\cT_M \eta_i^\varepsilon) \gamma_\varepsilon(\nabla \theta_i^\varepsilon), \Lap(\eta_i^\varepsilon - \eta_{i-1}^\varepsilon))_H
    \nonumber
      \\
    &\quad + \frac{\tau}{\mu^2}|u_i|_H^2, \ \mbox{ for } i = 1,2,3,\dots, n_\tau,
    \nonumber
  \end{align}
  via the following calculations:
  \begin{gather*}
    (-\Lap \eta_i^\varepsilon, -\Lap (\eta_i^\varepsilon - \eta_{i-1}^\varepsilon))_H \geq \frac{1}{2}\bigl( |\Lap \eta_i^\varepsilon|_H^2 - |\Lap \eta_{i-1}^\varepsilon|_H^2 \bigr),
  \end{gather*}
  and
  \begin{gather*}
    (u_i, -\Lap (\eta_i^\varepsilon - \eta_{i-1}^\varepsilon))_H \leq \frac{\mu^2}{4\tau}|\Lap (\eta_i^\varepsilon - \eta_{i-1}^\varepsilon)|_H^2 + \frac{\tau}{\mu^2}|u_i|_H^2.
  \end{gather*}
    \begin{align}
    &(g(\cT_M \eta_i^\varepsilon), \Lap (\eta_i^\varepsilon - \eta_{i-1}^\varepsilon))_H 
      \label{H2_eta3a}
    \\
    &\quad \leq \frac{\tau}{\mu^2} |g|_{L^\infty(-M,M)}^2 \L^N(\Omega) + \frac{\mu^2}{4\tau}|\Lap (\eta_i^\varepsilon - \eta_{i-1}^\varepsilon)|_H^2, 
    \nonumber
    \end{align}
        and
    \begin{align}
    &(\alpha'(\cT_M \eta_i^\varepsilon)\gamma_\varepsilon(\nabla \theta), \Lap(\eta_i^\varepsilon - \eta_{i-1}^\varepsilon))_H 
      \label{H2_eta3b}
    \\
    &\quad \leq \frac{\tau}{\mu^2}|\alpha'|_{L^\infty(-M,M)}^2 \int_\Omega \bigl( \varepsilon^2 + |\nabla \theta_i^\varepsilon|^2\bigr)\,dx + \frac{\mu^2}{4\tau} |\Lap(\eta_i^\varepsilon - \eta_{i-1}^\varepsilon)|_H^2
    \nonumber
    \\
    &\quad \leq \frac{\tau}{\mu^2} |\alpha'|_{L^\infty(-M,M)}^2 \bigl( \L^N(\Omega) + |\theta_i^\varepsilon|_V^2 \bigr) + \frac{\mu^2}{4\tau}|\Lap (\eta_i^\varepsilon - \eta_{i-1}^\varepsilon)|_H^2,
    \nonumber
    \\
    &\qquad\qquad\qquad\qquad\qquad \mbox{ for } i = 1,2,3,\dots, n_\tau.
    \nonumber
  \end{align}
  On account of \eqref{H1_theta}--\eqref{H2_eta3b}, we infer that:
  \begin{align}
    &\frac{\mu^2}{4\tau} |\Lap (\eta_i^\varepsilon - \eta_{i-1}^\varepsilon)|_H^2 
      \label{H2_eta4}
    \\
    &\quad \leq \frac{1}{2}\bigl( |\Lap \eta_{i-1}^\varepsilon|_H^2 - |\Lap \eta_i^\varepsilon|_H^2 \bigr) + \frac{\tau \L^N(\Omega)}{\mu^2} \bigl( |g|_{L^\infty(-M,M)}^2 + |\alpha'|_{L^\infty(-M,M)} \bigr) 
    \nonumber
      \\
    &\qquad + \frac{4 \tau (C_F + 1)(T + 1)|\alpha'|_{L^\infty(-M,M)}^2}{\mu^2(1 \wedge \delta_{\alpha}^2 \wedge \nu^4)}\bigl( |\theta_0|_V^2 + |u|_\sH^2 + |v|_\sH^2 + 1 \bigr)
    \nonumber
    \\
    &\quad \leq \frac{1}{2}\bigl( |\Lap \eta_{i-1}^\varepsilon|_H^2 - |\Lap \eta_i^\varepsilon|_H^2 \bigr) + \tau \widetilde C_3\bigl( |\theta_0|_V^2 + |u|_\sH^2 + |v|_\sH^2 + 1 \bigr),
    \nonumber
    \\
    &\qquad\qquad\qquad\qquad\qquad \mbox{ for } i = 1,2,3,\dots,n_\tau,
    \nonumber
  \end{align}
  with
  \begin{equation*}
    \widetilde C_3 := \frac{4(C_F + \L^N(\Omega) + 1)(T + 1)\bigl( |g|_{L^\infty(-M,M)}^2 + |\alpha'|_{L^\infty(-M,M)}^2 +1 \bigr)}{\mu^2(1 \wedge \delta_{\alpha}^2 \wedge \nu^4)}.
  \end{equation*}
  Hence, taking the sum of \eqref{H2_eta4} with respect to $ i = 1, 2, 3, \dots, n_\tau $, one can deduce from \eqref{embb01}, \eqref{Energy1}, and \eqref{H2_eta4} that:
  \begin{align*}
    &\frac{1}{\tau} \sum_{i=1}^{n_\tau} |\eta_i^\varepsilon - \eta_{i-1}^\varepsilon|_{H^2(\Omega)}^2 \leq \frac{C_0}{\tau} \sum_{i=1}^{n_\tau} \bigl( |A_N (\eta_i^\varepsilon - \eta_{i-1}^\varepsilon)|_H^2 + |\eta_i^\varepsilon - \eta_{i-1}^\varepsilon|_H^2 \bigr)
    \\
    &\quad = \frac{C_0}{\tau} \sum_{i=1}^{n_\tau} \bigl( |\Lap (\eta_i^\varepsilon - \eta_{i-1}^\varepsilon)|_H^2 + |\eta_i^\varepsilon - \eta_{i-1}^\varepsilon|_H^2 \bigr)
    \\
    &\quad \leq \frac{2C_0}{\mu^2}|\Lap \eta_0|_H^2 + \frac{4C_0 \widetilde C_3 (T+1)}{\mu^2}\bigl( |\theta_0|_V^2 + |u|_\sH^2 + |v|_\sH^2 + 1\bigr) 
    \\
    &\quad\qquad + 4C_0 \left( \F_\varepsilon(\eta_0, \theta_0) +  \frac{1}{2(1 \wedge \delta_\alpha)} \bigl( |u|_\sH^2 + |v|_\sH^2 + 1 \bigr)  \right)
    \\
    &\quad \leq \frac{2NC_0}{\mu^2}|\eta_0|_{H^2(\Omega)}^2 + \frac{4C_0 \widetilde C_3 (T+1)}{\mu^2}\bigl( |\theta_0|_V^2 + |u|_\sH^2 + |v|_\sH^2 + 1\bigr)
    \\
    &\quad\qquad +  \frac{4C_0 (C_F + 1)}{1 \wedge \delta_\alpha} \bigl( |u|_\sH^2 + |v|_\sH^2 + 1 \bigr)
    \\
    &\quad \leq C_2 \bigl( |\eta_0|_{H^2(\Omega)}^2 + |\theta_0|_V^2 + |u|_\sH^2 + |v|_\sH^2 + 1 \bigr),
  \end{align*} 
  where
  \begin{gather*}
    C_2 := \frac{4NC_0 (T + 1)(\widetilde C_3 + C_F + 1)}{1 \wedge \mu^2 \wedge \delta_{\alpha}}.
  \end{gather*}
  
  Thus we conclude \cref{Lem_eta}.
\end{proof}

\begin{lemma}\label{Lem_theta}
  There exist a small time-step size $\tau_{**} \in (0,\tau_*)$ and a constant $C_4 > 0$ such that for any $\tau \in (0,\tau_{**})$, the following estimate holds:
  \begin{gather}
    |\theta_i^\varepsilon|_{H^2(\Omega)}^2 \leq C_4\bigl( |\eta_0|_{H^2(\Omega)}^2 + |\theta_0|_{H^2(\Omega)}^2 + |u|_\sH^2 + |v|_\sH^2 + 1 \bigr)^2, 
      \label{H2_theta1}
    \\
    \mbox{for } i= 1,2,3,\dots, n_\tau.
      \nonumber
  \end{gather}
\end{lemma}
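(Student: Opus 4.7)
The plan is to derive a recursive energy inequality for $a_i := |\Lap \theta_i^\varepsilon|_H^2$ by testing \eqref{AP_theta} in $H$ against $-\Lap \theta_i^\varepsilon = A_N \theta_i^\varepsilon$. Setting $w_i := \theta_i^\varepsilon - \theta_{i-1}^\varepsilon$, the pseudo-parabolic contribution $\frac{\nu^2}{\tau} A_N w_i$ furnishes, via the polarization identity $2(a-b)a = a^2 - b^2 + (a-b)^2$,
\begin{equation*}
\tfrac{\nu^2}{\tau}\bigl(A_N w_i, A_N \theta_i^\varepsilon\bigr)_H = \tfrac{\nu^2}{2\tau}\bigl(|\Lap\theta_i^\varepsilon|_H^2 - |\Lap\theta_{i-1}^\varepsilon|_H^2 + |\Lap w_i|_H^2\bigr),
\end{equation*}
placing the sought discrete-time telescoping on the LHS together with an additional nonnegative term $\tfrac{\nu^2}{2\tau}|\Lap w_i|_H^2$.

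The decisive pairing is the singular-diffusion term $(-\diver J_i, -\Lap\theta_i^\varepsilon)_H$ with $J_i := \widetilde\alpha_M(\eta_{i-1}^\varepsilon)\nabla\gamma_\varepsilon(\nabla\theta_i^\varepsilon)$. The key geometric fact is $J_i \cdot n_\Gamma = 0$ on $\Gamma$, which follows from $\theta_i^\varepsilon \in W_0$ and the identity $\nabla\gamma_\varepsilon(\nabla\theta_i^\varepsilon)\cdot n_\Gamma = (\nabla\theta_i^\varepsilon\cdot n_\Gamma)/\sqrt{\varepsilon^2+|\nabla\theta_i^\varepsilon|^2} = 0$. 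Two component-wise integrations by parts (the first boundary integral is annihilated by $\nabla\theta_i^\varepsilon\cdot n_\Gamma = 0$) give
\begin{align*}
(-\diver J_i, -\Lap\theta_i^\varepsilon)_H &= \int_\Omega \widetilde\alpha_M(\eta_{i-1}^\varepsilon)\sum_{k=1}^N \bigl\langle D^2\gamma_\varepsilon(\nabla\theta_i^\varepsilon)\,\partial_k\nabla\theta_i^\varepsilon,\, \partial_k\nabla\theta_i^\varepsilon\bigr\rangle\,dx\\
&\quad + \int_\Omega \alpha(\cT_M\eta_{i-1}^\varepsilon)\sum_{k=1}^N \partial_k\eta_{i-1}^\varepsilon\,\nabla\gamma_\varepsilon(\nabla\theta_i^\varepsilon)\cdot\partial_k\nabla\theta_i^\varepsilon\,dx,
\end{align*}
modulo a residual boundary contribution $-\sum_k \int_\Gamma (\partial_k J_i \cdot n_\Gamma)\partial_k\theta_i^\varepsilon\,dS$ that will be controlled using the $C^\infty$-regularity of $\Gamma$ together with the tangential character of $\nabla\theta_i^\varepsilon$ on $\Gamma$. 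The first integral is nonnegative by convexity of $\gamma_\varepsilon$ and is kept on the LHS for free; the second is dominated by $|\alpha|_{L^\infty(-M,M)}|\nabla\eta_{i-1}^\varepsilon|_{[H]^N}|D^2\theta_i^\varepsilon|_{[H]^{N\times N}}$ using $|\nabla\gamma_\varepsilon| \leq 1$, and then absorbed via Young's inequality and \eqref{embb01}.

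The remaining pairings $\bigl(\alpha_0(\cT_M\eta_{i-1}^\varepsilon) w_i/\tau, -\Lap\theta_i^\varepsilon\bigr)_H$ and $(v_i, -\Lap\theta_i^\varepsilon)_H$ are estimated by Cauchy--Schwarz and Young's inequality with a constant $\delta > 0$ chosen independently of $\tau$, so that the produced $\delta a_i$ on the RHS is absorbed into $\tfrac{\nu^2}{2\tau} a_i$ on the LHS whenever $\tau$ is sufficiently small. Setting $\mathcal{S}_i := |v_i|_H^2 + |w_i|_H^2/\tau^2 + |\nabla\eta_{i-1}^\varepsilon|_{[H]^N}^2$, the resulting recursion has the form
\begin{equation*}
a_i \leq \Bigl(1 + \frac{C\tau}{\nu^2}\Bigr) a_{i-1} + \frac{C\tau}{\nu^2}\mathcal{S}_i.
\end{equation*}
The uniform summability of $\tau\sum_{i=1}^{n_\tau}\mathcal{S}_i$ in $\tau,\varepsilon$ is ensured by the choice of $\tau_*$ at the start of the section, the energy inequality \eqref{Energy1} (supplying $\sum|w_i|_H^2/\tau$ bounded), and \cref{Lem_eta} (from which a Cauchy--Schwarz-in-time argument yields a uniform $V$-bound for $\eta_i^\varepsilon$). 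Discrete Grönwall then delivers $a_n \leq e^{CT/\nu^2}(a_0 + C_*)$, and \eqref{embb01} together with the $H$-bound on $\theta_n^\varepsilon$ from \eqref{H1_theta} promotes this to \eqref{H2_theta1}; the squared form on the right-hand side there is a convenient wrapping of the linear-in-data estimate actually produced.

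The main obstacle will be the rigorous execution of the two component-wise integrations by parts in the singular-diffusion term, and in particular the control of the residual boundary integral $\sum_k\int_\Gamma(\partial_k J_i\cdot n_\Gamma)\partial_k\theta_i^\varepsilon\,dS$, which does not obviously vanish and will require a careful tangential-normal decomposition at $\Gamma$ exploiting $J_i \cdot n_\Gamma = 0$, $\nabla\theta_i^\varepsilon \cdot n_\Gamma = 0$, and the $C^\infty$-regularity of $\Gamma$.
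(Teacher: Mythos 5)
Your overall strategy — test \eqref{AP_theta} against $-\Lap\theta_i^\varepsilon$, telescope the pseudo-parabolic term, bound the lower-order pairings, and run discrete Gr\"onwall — matches the paper's skeleton. But you and the paper handle the singular-diffusion pairing $\bigl(\diver(\widetilde\alpha_M(\eta_{i-1}^\varepsilon)\nabla\gamma_\varepsilon(\nabla\theta_i^\varepsilon)),\,\Lap\theta_i^\varepsilon\bigr)_H$ in genuinely different ways, and this is the crux of the proof.

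You attempt to perform the two component-wise integrations by parts directly, split the result into the nonnegative Hessian-quadratic part, the cross term with $\nabla\eta_{i-1}^\varepsilon$, and a residual boundary integral, and you explicitly flag the boundary integral $\sum_k\int_\Gamma(\partial_k J_i\cdot n_\Gamma)\partial_k\theta_i^\varepsilon\,dS$ as the unresolved obstacle. The paper avoids this entirely by invoking a previously established technical estimate (the lemma it labels \cref{SD_part}, quoted from \cite[Lemma 3.2]{aiki2023class}), which states the lower bound
\[
\bigl(\diver(\alpha^\circ\nabla\gamma_\varepsilon(\nabla w)),\Lap w\bigr)_H \geq -|\nabla^2 w|_{[H]^{N\times N}}^2 - C_5(L)(|\alpha^\circ|_V^2+1)(|w|_V^2+1)
\]
uniformly in $\varepsilon$, with $C_5$ depending only on an $L^\infty$ bound for $\alpha^\circ$. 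The $-|\nabla^2 w|^2$ loss is then converted to $-C_0|\Lap\theta_i^\varepsilon|_H^2$ via \eqref{embb01} and carried on the right of the discrete Gr\"onwall inequality, which is why the paper needs the smallness condition $\tau_{**}<1/(2\widetilde C_7)$. That pre-packaged lemma \emph{is} the rigorous resolution of your boundary-term problem (for $C^\infty$ boundary), and your proposal has a real gap exactly there: you have not shown the residual boundary integral is controlled, and you yourself concede that it ``does not obviously vanish.'' Note also that after the two integrations by parts the correct lower bound is \emph{not} zero — the cited lemma still incurs the loss $-|\nabla^2 w|^2$ — so even if you do close the boundary estimate you should not expect the singular-diffusion term to be simply discarded as nonnegative plus an absorbable cross term; you must absorb a full $|\Lap\theta_i^\varepsilon|_H^2$ into the telescoping LHS via the Gr\"onwall step, as the paper does.

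Two smaller remarks. First, your treatment of the time-increment term $\alpha_0(\cT_M\eta_{i-1}^\varepsilon)w_i/\tau$ — pushing $|w_i|_H^2/\tau^2$ into $\mathcal{S}_i$ and using $\sum_i|w_i|_H^2/\tau$ bounded from \eqref{Energy1} — is sound and arguably cleaner than the paper's trick of re-expressing $-|w_i|_H^2/\tau^2$ through the energy drop $\F_\varepsilon(\eta_{i-1}^\varepsilon,\theta_{i-1}^\varepsilon)-\F_\varepsilon(\eta_i^\varepsilon,\theta_i^\varepsilon)$ as in the paper's estimate before Gr\"onwall. Second, your target recursion $a_i\leq(1+C\tau/\nu^2)a_{i-1}+(C\tau/\nu^2)\mathcal{S}_i$ puts the Gr\"onwall coefficient entirely on $a_{i-1}$, whereas the absorption of $|\Lap\theta_i^\varepsilon|_H^2$ coming from the singular-diffusion loss forces the coefficient to sit on $a_i$; this is precisely why the paper needs the explicit threshold $\tau_{**}$ with $1-\tau\widetilde C_7>1/2$, a detail your write-up would have to recover once the singular-diffusion estimate is completed correctly.
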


We can prove this \cref{Lem_theta} by using the following technical lemma, obtained in \cite{aiki2023class}.

\begin{lemma} \label{SD_part}
  (cf. \cite[Lemma 3.2]{aiki2023class}) Let us fix $ \varepsilon > 0$, $w \in W_0 $, and $ \alpha^\circ \in L^\infty(\Omega) \cap V$. Then, for any $L \geq |\alpha^\circ|_{L^\infty(\Omega)}$, there exists a constant $C_5(L) > 0$, depending only on $L$, and being independent of $\varepsilon$ and $w$, such that:
  \begin{equation*}
    \bigl( \diver (\alpha^\circ \nabla \gamma_\varepsilon(\nabla w) ), \Lap w \bigr)_H \geq -|\nabla^2 w|_{[H]^{N \times N}}^2 - C_5(L) (|\alpha^\circ|_V^2 + 1)(|w|_V^2 + 1).
  \end{equation*}
\end{lemma}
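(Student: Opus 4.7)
The plan is to test the discrete $\theta$-equation \eqref{AP_theta} against $A_N\theta_i^\varepsilon = -\Lap\theta_i^\varepsilon \in H$, and then estimate the resulting four contributions. The pseudo-parabolic term $(\nu^2/\tau)A_N(\theta_i^\varepsilon-\theta_{i-1}^\varepsilon)$ yields the telescoping lower bound $(\nu^2/(2\tau))(|A_N\theta_i^\varepsilon|_H^2-|A_N\theta_{i-1}^\varepsilon|_H^2)$. For the singular-diffusion contribution $(-\diver(\widetilde\alpha_M(\eta_{i-1}^\varepsilon)\nabla\gamma_\varepsilon(\nabla\theta_i^\varepsilon)), A_N\theta_i^\varepsilon)_H$, which equals $(\diver(\widetilde\alpha_M(\eta_{i-1}^\varepsilon)\nabla\gamma_\varepsilon(\nabla\theta_i^\varepsilon)),\Lap\theta_i^\varepsilon)_H$, I apply \cref{SD_part} with $\alpha^\circ:=\widetilde\alpha_M(\eta_{i-1}^\varepsilon)$ and $w:=\theta_i^\varepsilon$ to obtain a lower bound of the form $-|\nabla^2\theta_i^\varepsilon|_{[H]^{N\times N}}^2 - C_5(L)(|\widetilde\alpha_M(\eta_{i-1}^\varepsilon)|_V^2+1)(|\theta_i^\varepsilon|_V^2+1)$.

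The coupling term $(\alpha_0(\cT_M\eta_{i-1}^\varepsilon)(\theta_i^\varepsilon-\theta_{i-1}^\varepsilon)/\tau, A_N\theta_i^\varepsilon)_H$ and the source term $(v_i,A_N\theta_i^\varepsilon)_H$ are bounded by Young's inequality using absolute (\emph{not} $\tau$-dependent) constants $\delta_1,\delta_2$; after using the Neumann elliptic regularity \eqref{embb01} to dominate $|\nabla^2\theta_i^\varepsilon|^2$ by $C_0(|A_N\theta_i^\varepsilon|_H^2+|\theta_i^\varepsilon|_V^2)$, the resulting estimate takes the schematic form
\begin{equation*}
\bigl(\tfrac{\nu^2}{2\tau}-B\bigr)|A_N\theta_i^\varepsilon|_H^2 - \tfrac{\nu^2}{2\tau}|A_N\theta_{i-1}^\varepsilon|_H^2 \leq R_i,
\end{equation*}
where $B:=\delta_1+\delta_2+C_0$ is an absolute constant and $R_i$ collects per-step remainders involving $|v_i|_H^2$, $|\theta_i^\varepsilon-\theta_{i-1}^\varepsilon|_H^2/\tau^2$, $|\theta_i^\varepsilon|_V^2$, and $(|\widetilde\alpha_M(\eta_{i-1}^\varepsilon)|_V^2+1)(|\theta_i^\varepsilon|_V^2+1)$ with bounded coefficients. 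Choosing $\tau_{**}\in(0,\tau_*)$ so that $\nu^2/(2\tau)-B\geq\nu^2/(4\tau)$ for every $\tau\in(0,\tau_{**})$ rearranges this into the Gronwall-ready recursion $|A_N\theta_i^\varepsilon|_H^2\leq(1+C\tau)|A_N\theta_{i-1}^\varepsilon|_H^2+(4\tau/\nu^2)R_i$.

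To close the estimate I sum over $i=1,\dots,n_\tau$ and control each component of $(4\tau/\nu^2)R_i$ using results already in hand: the intermediate bound $|\theta_i^\varepsilon|_V^2\leq C(|\theta_0|_V^2+|u|_\sH^2+|v|_\sH^2+1)$ extracted in \eqref{H1_theta}; the pointwise $V$-bound on $\eta_{i-1}^\varepsilon$ obtained by writing $\eta_{i-1}^\varepsilon=\eta_0+\sum_{j=1}^{i-1}(\eta_j^\varepsilon-\eta_{j-1}^\varepsilon)$ together with \cref{Lem_eta}, which in turn controls $|\widetilde\alpha_M(\eta_{i-1}^\varepsilon)|_V^2$ by the Lipschitz character of $\widetilde\alpha_M$; the summed bound $\sum_{i=1}^{n_\tau}(1/\tau)|\theta_i^\varepsilon-\theta_{i-1}^\varepsilon|_H^2\leq C$ coming from the energy inequality \eqref{Energy1}; and the time-discretization estimate $\tau\sum_{i=1}^{n_\tau}|v_i|_H^2\leq|v|_\sH^2+1$ set up before Subsection 4.1. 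Discrete Gronwall then gives $|A_N\theta_k^\varepsilon|_H^2\leq C\,\Theta^2$ with $\Theta:=|\eta_0|_{H^2(\Omega)}^2+|\theta_0|_{H^2(\Omega)}^2+|u|_\sH^2+|v|_\sH^2+1$, the quadratic dependence arising from the product factor $(|\widetilde\alpha_M(\eta_{i-1}^\varepsilon)|_V^2+1)(|\theta_i^\varepsilon|_V^2+1)$ supplied by \cref{SD_part}; combining with \eqref{embb01} delivers \eqref{H2_theta1}.

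The main obstacle is the calibration just described. The $|\nabla^2\theta_i^\varepsilon|^2$ term produced by \cref{SD_part} and the Young residues from the coupling and source all generate $|A_N\theta_i^\varepsilon|_H^2$-contributions on the right-hand side, which must be absorbed into $(\nu^2/(2\tau))|A_N\theta_i^\varepsilon|_H^2$ on the left with only a $(1+C\tau)$ amplification factor acting on $|A_N\theta_{i-1}^\varepsilon|_H^2$; any ratio that remained strictly larger than $1+O(\tau)$ would produce exponential blow-up across the $n_\tau\sim T/\tau$ discrete steps. This is precisely what forces the use of $\tau$-independent Young constants $\delta_1,\delta_2$ and the explicit smallness condition $\tau_{**}\leq\nu^2/(4B)$ that defines the new step-size threshold.
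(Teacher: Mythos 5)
Your proposal does not prove the statement it was supposed to prove. The statement is \cref{SD_part} itself: the inequality
\begin{equation*}
  \bigl( \diver (\alpha^\circ \nabla \gamma_\varepsilon(\nabla w) ), \Lap w \bigr)_H \geq -|\nabla^2 w|_{[H]^{N \times N}}^2 - C_5(L) (|\alpha^\circ|_V^2 + 1)(|w|_V^2 + 1),
\end{equation*}
with $C_5(L)$ depending only on $L \geq |\alpha^\circ|_{L^\infty(\Omega)}$ and independent of $\varepsilon$ and $w$. What you wrote is instead an outline of the proof of \cref{Lem_theta} (the uniform $H^2(\Omega)$ bound \eqref{H2_theta1} for $\theta_i^\varepsilon$), and in the middle of it you explicitly \emph{apply} \cref{SD_part} with $\alpha^\circ = \widetilde\alpha_M(\eta_{i-1}^\varepsilon)$, $w = \theta_i^\varepsilon$. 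In other words, the statement to be proved is assumed as a tool, and no argument for it appears anywhere in your text. Nothing in your proposal addresses the substance of the lemma: the decomposition $\diver(\alpha^\circ \nabla\gamma_\varepsilon(\nabla w)) = \nabla\alpha^\circ \cdot \nabla\gamma_\varepsilon(\nabla w) + \alpha^\circ\, \diver(\nabla\gamma_\varepsilon(\nabla w))$, the use of $|\nabla\gamma_\varepsilon|_{L^\infty(\R^N)} \leq 1$ and of the Hessian structure of $\gamma_\varepsilon$, the interpolation/embedding estimates available for $N \leq 3$ that let one trade the gradient term against $|\nabla^2 w|_{[H]^{N\times N}}^2$ plus lower-order quantities, or the reason the resulting constant depends only on the $L^\infty$ bound $L$ and not on $\varepsilon$ or $w$. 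For the record, the paper does not reprove this lemma either; it is quoted from \cite[Lemma 3.2]{aiki2023class}, so a correct submission would either reproduce that argument or explicitly defer to it --- but it cannot consist of the downstream application.

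As a secondary remark, the material you did write is essentially the paper's strategy for \cref{Lem_theta}, with one notable deviation: the paper does not need your absorption condition $\nu^2/(2\tau) - B \geq \nu^2/(4\tau)$. It handles the coupling term $\tau^{-1}(\alpha_0(\cT_M\eta_{i-1}^\varepsilon)(\theta_i^\varepsilon - \theta_{i-1}^\varepsilon), -\Lap\theta_i^\varepsilon)_H$ by converting $|\theta_i^\varepsilon - \theta_{i-1}^\varepsilon|_H^2/\tau^2$ into a telescoping free-energy difference via \eqref{Energy1}, builds the quantity $X_i = \nu^2|\Lap\theta_i^\varepsilon|_H^2 + \delta_\alpha^{-1}|\alpha_0|_{L^\infty(-M,M)}^2\F_\varepsilon(\eta_i^\varepsilon,\theta_i^\varepsilon)$, and then applies the discrete Gronwall lemma with the mild smallness $\tau_{**} < 1/(2\widetilde C_7)$. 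But this comparison is moot for the task at hand, since the target statement was \cref{SD_part}, which your proposal leaves entirely unproved.
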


\begin{proof}[Proof of \cref{Lem_theta}]
  First, we note that \cref{Lem_eta} leads to the boundedness of $\{ \eta_i^\varepsilon \}_{i = 0}^{n_\tau}$ in $H^2(\Omega)$, with the following estimate:
  \begin{align}
    |\eta_i^\varepsilon|_{H^2(\Omega)}^2 &\leq 2|\eta_0|_{H^2(\Omega)}^2 + 2 \left( \sum_{i=1}^{n_\tau} |\eta_i^\varepsilon - \eta_{i-1}^\varepsilon|_{H^2(\Omega)} \right)^2 
      \label{H2_eta5}
    \\
    &\leq 2|\eta_0|_{H^2(\Omega)}^2 + 2 (T+1) \sum_{i=1}^{n_\tau} \frac{1}{\tau}|\eta_i^\varepsilon - \eta_{i-1}^\varepsilon|_{H^2(\Omega)}^2
    \nonumber
      \\
    &\leq 2(T + 1)(C_2 + 1)\bigl( |\eta_0|_{H^2(\Omega)}^2 + |\theta_0|_V^2 + |u|_\sH^2 + |v|_\sH^2 + 1 \bigr),
    \nonumber
    \\
    &\qquad\qquad\qquad \ \mbox{ for } i = 1,2,3,\dots, n_\tau.
    \nonumber
  \end{align}
  Moreover, by \eqref{H2_eta5} and continuous embedding from $H^2(\Omega)$ to $L^\infty(\Omega)$ under $N \leq 3$, we see that $\widetilde \alpha_M(\eta_i^\varepsilon) \in L^\infty(\Omega) \cap V$ for any $i = 1,2,3,\dots, n_\tau$, with the following estimates hold:
  \begin{align}
      |\widetilde \alpha_M(\eta_i^\varepsilon)|_{L^\infty(\Omega)}^2 & \leq 2\alpha(0)^2 + 2|\alpha'|_{L^\infty(-M,M)}^2 |\eta_i^\varepsilon|_{L^\infty(\Omega)}^2 
      \label{ken01}
    \\
    & \leq 2\alpha(0)^2 + 2(C_{H^2}^{L^\infty})^2 |\alpha'|_{L^\infty(-M,M)}^2 |\eta_i^\varepsilon|_{H^2(\Omega)}^2,
      \nonumber
  \end{align}
  and
  \begin{gather}
    |\widetilde \alpha_M(\eta_i^\varepsilon)|_V^2 \leq \L^N(\Omega)|\widetilde \alpha_M(\eta_i^\varepsilon)|_{L^\infty(\Omega)}^2 + |\alpha'|_{L^\infty(-M,M)}^2 |\nabla \eta_i^\varepsilon|_{[H]^N}^2 
      \label{dm01}
    \\
    \leq 2 \L^N(\Omega) \bigl( \alpha(0)^2 + (C_{H^2}^{L^\infty} )^2 |\alpha'|_{L^\infty(-M,M)}^2 |\eta_i^\varepsilon|_{H^2(\Omega)}^2 \bigr) + |\alpha'|_{L^\infty(-M,M)}^2 |\eta_i^\varepsilon|_{H^2(\Omega)}^2
      \nonumber
    \\
    \leq \widetilde C_6\bigl( |\eta_i^\varepsilon|_{H^2(\Omega)}^2 + 1 \bigr), \ \mbox{ for } i = 1,2,3,\dots,n_\tau,
      \nonumber
  \end{gather}
  where $C_{H^2}^{L^\infty}$ is a constant of the embedding from $H^2(\Omega)$ to $L^\infty(\Omega)$, and
  \begin{equation*}
    \widetilde C_6 := 2 \bigl( \L^N(\Omega) \alpha(0)^2 + \L^N(\Omega) ( C_{H^2}^{L^\infty} )^2 + 1 \bigr)\bigl( |\alpha'|_{L^\infty(-M,M)}^2 + 1 \bigr),
  \end{equation*}

  Next, we verify the inequality \eqref{H2_theta1}. Let us consider to multiply the both sides of \eqref{AP_theta} by $-\Lap \theta_i^\varepsilon$. 

    By applying Young's inequality, we have:
  \begin{gather}
    \frac{\nu^2}{\tau}\bigl( -\Lap(\theta_i^\varepsilon - \theta_{i-1}^\varepsilon), -\Lap \theta_i^\varepsilon \bigr)_H \geq \frac{\nu^2}{2\tau} \bigl( |\Lap \theta_i^\varepsilon|_H^2 - |\Lap \theta_{i-1}^\varepsilon|_H^2 \bigr), 
      \label{H2_theta2}
  \end{gather}
  and
  \begin{gather}
    (v_i, -\Lap \theta_i^\varepsilon)_H \leq \frac{1}{2}|\Lap \theta_i^\varepsilon|_H^2 + \frac{1}{2}|v_i|_H^2, \ \mbox{ for } i = 1,2,3,\dots, n_\tau. 
      \label{H2_theta22}
  \end{gather}
  Moreover, from \eqref{Energy1} and (A3), we see that:
  \begin{align}
    &\frac{1}{\tau} \bigl( \alpha_0(\cT_M \eta_{i-1}^\varepsilon) (\theta_i^\varepsilon - \theta_{i-1}^\varepsilon), -\Lap \theta_i^\varepsilon \bigr)_H 
      \label{H2_theta3}
    \\
    &\qquad \geq - \frac{1}{2\tau^2} |\alpha_0|_{L^\infty(-M,M)}^2 |\theta_i^\varepsilon - \theta_{i-1}^\varepsilon|_H^2 - \frac{1}{2}|\Lap \theta_i^\varepsilon|_H^2,
    \nonumber
      \\
    &\qquad \geq \frac{|\alpha_0|_{L^\infty(-M,M)}^2}{2\delta_{\alpha}} \cdot \frac{1}{\tau}(\F_\varepsilon(\eta_i^\varepsilon, \theta_i^\varepsilon) - \F_\varepsilon(\eta_{i-1}^\varepsilon, \theta_{i-1}^\varepsilon)) - \frac{1}{2}|\Lap \theta_i^\varepsilon|_H^2,
    \nonumber
    \\
    &\qquad\qquad\qquad\qquad \mbox{ for } i = 1,2,3,\dots, n_\tau.
    \nonumber
  \end{align}
  Using \eqref{embb01}, \eqref{H2_eta5}--\eqref{dm01}, and applying \cref{SD_part} to the case that:
  \begin{align*}
    \alpha^\circ = \widetilde \alpha_M(\eta_{i-1}^\varepsilon) \mbox{ and } w = \theta_i^\varepsilon, \mbox{ for each } i \in \{ 1,2, \ldots, n_\tau \},
  \end{align*}
  and 
  \begin{align*}
      L & = 2 \alpha(0)^2 + 2 (C_{H^2}^{L^\infty})^2 |\alpha'|_{L^\infty(-M,M)}^2 \cdot 
      \\
      & \cdot  2(T + 1)(C_2 + 1)\bigl( |\eta_0|_{H^2(\Omega)}^2 + |\theta_0|_V^2 + |u|_\sH^2 + |v|_\sH^2 + 1 \bigr),
  \end{align*}
  it is observed that:
  \begin{align}
    &\quad \bigl( \diver (\widetilde \alpha_M(\eta_{i-1}^\varepsilon) \nabla \gamma_\varepsilon(\nabla \theta_i^\varepsilon)), \Lap \theta_i^\varepsilon \bigr)_H 
      \label{H2_theta4}
    \\
    &\geq -|\nabla^2 \theta_i^\varepsilon|_{[H]^{N \times N}}^2 - C_5(L) (|\widetilde \alpha_M(\eta_{i-1}^\varepsilon)|_V^2 + 1)(|\theta_i^\varepsilon|_V^2 + 1)
    \nonumber
      \\
    &\geq -C_0 \bigl( |A_N \theta_i^\varepsilon|_H^2 + |\theta_i^\varepsilon|_H^2 \bigr)
    \nonumber
    \\
    &\qquad- C_5(L)\bigl( \widetilde C_6\bigl( |\eta_{i-1}^\varepsilon|_{H^2(\Omega)}^2 + 1 \bigr) + 1 \bigr)\bigl( |\theta_i^\varepsilon|_V^2 + 1 \bigr)
    \nonumber
    \\
    &\geq -C_0 |\Lap \theta_i^\varepsilon|_H^2 - (C_5(L) + C_0)(\widetilde C_6 + 1) \bigl( |\theta_i^\varepsilon|_V^2 + 1 \bigr)\bigl( |\eta_{i-1}^\varepsilon|_{H^2(\Omega)}^2 + 1 \bigr),
    \nonumber
    \\
    &\qquad\qquad\qquad\qquad \mbox{ for } i = 1,2,3,\dots, n_\tau.
    \nonumber
  \end{align}

  Now, by using \eqref{H2_theta2}--\eqref{H2_theta4}, we will obtain that:
  \begin{equation}
    \frac{1}{\tau}(X_i - X_{i-1}) \leq \widetilde C_7(X_i + F_i), \ \mbox{ for } i = 1,2,3,\dots,n_\tau, 
      \label{H2_theta5}
  \end{equation}
  with 
  \begin{gather*}
      \begin{cases}
          \ds X_i := \nu^2 |\Lap \theta_i^\varepsilon|_H^2 + \frac{|\alpha_0|_{L^\infty(-M,M)}^2}{\delta_{\alpha}} \F_\varepsilon(\eta_i^\varepsilon, \theta_i^\varepsilon), 
          \\[2ex]
    \ds F_i := \bigl( |\theta_i^\varepsilon|_V^2 + |v_i|_H^2 + 1 \bigr)\bigl( |\eta_{i-1}^\varepsilon|_{H^2(\Omega)}^2 + 1 \bigr), 
      \end{cases} \mbox{ for } i= 1,2,3,\dots, n_\tau,
  \end{gather*}
  and
  \begin{equation*}
    \widetilde C_7 := \frac{4(C_5(L) + C_0 + 1)(\widetilde C_6 + 1)}{1 \wedge \nu^2} \geq 4.
  \end{equation*}
  Here, let us take $\tau_{**} \in (0,\tau_*)$ satisfying:
  \begin{equation*}
    \tau_{**} < \min \left\{ \tau_*, \frac{1}{2 \widetilde C_7} \right\}, \mbox{ and in particular, } 1- \tau_{**} \widetilde C_7 > \frac{1}{2}.
  \end{equation*}
  Then, applying the discrete version of Gronwall's lemma (cf. \cite[Section 3.1]{emmrich1999discrete}) to \eqref{H2_theta5}, one can see from \eqref{H1_theta}, \eqref{H2_eta5}, and \eqref{H2_theta5} that:
  \begin{align}
    &\quad X_i \leq \widetilde C_7 e^{2 \widetilde C_7(T + 1)}\left( X_0 + \tau\sum_{i=1}^{n_\tau} F_i \right) 
      \label{H2_theta6}
    \\
    &\leq \widetilde C_7e^{2 \widetilde C_7(T + 1)} \left( \nu^2 |\Lap \theta_0|_H^2 + \frac{|\alpha_0|_{L^\infty(-M,M)}^2 C_F}{\delta_{\alpha}} \right)
    \nonumber
      \\
    &\,\quad + \frac{4\widetilde C_7e^{2 \widetilde C_7(T + 1)}(C_F + 3)(T + 1)^2}{1 \wedge \delta_{\alpha}^2 \wedge \nu^4} \bigl( |\theta_0|_V^2 + |u|_\sH^2 + |v|_\sH^2 + 1 \bigr) \cdot 
    \nonumber
    \\
    &\qquad\quad \cdot 2(T + 1)(C_2 + 2)\bigl( |\eta_0|_{H^2(\Omega)}^2 + |\theta_0|_V^2 + |u|_\sH^2 + |v|_\sH^2 + 1 \bigr)
    \nonumber
    \\
    &\leq \widetilde C_8\bigl( |\eta_0|_{H^2(\Omega)}^2 + |\theta_0|_{H^2(\Omega)}^2 + |u|_\sH^2 + |v|_\sH^2 + 1 \bigr)^2, \ \mbox{ for } i = 1,2,3,\dots, n_\tau,
    \nonumber
  \end{align}
  where
  \begin{equation*}
    \widetilde C_8 := \frac{8 \widetilde C_7 e^{2 \widetilde C_7(T + 1)}(C_F + 3)(T + 1)^3 (C_2 + N \nu^2 + |\alpha_0|_{L^\infty(-M,M)} + 2)}{1 \wedge \delta_{\alpha}^2 \wedge \nu^4}.
  \end{equation*}
  In the light of \eqref{embb01} \eqref{H1_theta}, and \eqref{H2_theta6}, we arrive at:
  \begin{align*}
    &\quad |\theta_i^\varepsilon|_{H^2(\Omega)}^2 \leq C_0 \bigl( |A_N \theta_i^\varepsilon|_H^2 + |\theta_i^\varepsilon|_H^2 \bigr) = C_0 \bigl( |\Lap \theta_i^\varepsilon|_H^2 + |\theta_i^\varepsilon|_H^2 \bigr)
    \\
    &\leq \frac{C_0}{\nu^2} X_i + \frac{4C_0(C_F + 1)(T + 1)}{1 \wedge \delta_{\alpha}^2 \wedge \nu^4}\bigl( |\theta_0|_V^2 + |u|_\sH^2 + |v|_\sH^2 + 1 \bigr)
    \\
    &\leq \frac{4C_0(C_F + \widetilde C_8 + 1)(T + 1)}{1 \wedge \delta_{\alpha}^2 \wedge \nu^4} \bigl( |\eta_0|_{H^2(\Omega)}^2 + |\theta_0|_{H^2(\Omega)}^2 + |u|_\sH^2 + |v|_\sH^2 + 1 \bigr)^2,
    \\
    &\qquad\qquad\qquad\qquad\qquad\mbox{ for } i = 1,2,3,\dots,n_\tau.
  \end{align*}
  
  Thus we conclude \cref{Lem_theta} with the constant:
  \begin{equation*}
    C_4 = \frac{4C_0(C_F + \widetilde C_8 + 1)(T + 1)}{1 \wedge \delta_{\alpha}^2 \wedge \nu^4}.
  \end{equation*}
\end{proof}

Next, we confirm the comparison principle for single pseudo-parabolic equation, which will play a key-role in the $L^\infty$-estimate of the component $\eta$.

\begin{lemma} \label{Lem_CPeta}
  We assume that $\eta^1, \eta^2 \in W^{1,2}(0,T;W_0)$, $\eta_0^1, \eta_0^2 \in W_0$, $\widetilde \theta \in \sV$, $\widetilde u \in \sH$, and
  \begin{equation}
    \left\{ \begin{aligned}
      &(-1)^{i-1} \left( \partial_t \eta^i - \Lap (\eta^i + \mu^2 \partial_t \eta^i ) + g(\cT_M \eta^i) + \alpha'(\cT_M \eta^i)|\nabla \widetilde \theta| \right) 
        \\
        & \qquad \leq \widetilde u, \mbox{ a.e. in } Q,
      \\
      &\eta^i(0) = \eta_0^i, \mbox{ a.e. in } \Omega.
    \end{aligned} \right. \label{CP_eta1}
  \end{equation}
  Then, there exists a constant $C_9 > 0$ such that:
  \begin{equation*}
    \bigl| [\eta^1 - \eta^2]^+(t) \bigr|_V^2 \leq C_9 \bigl| [\eta_0^1 - \eta_0^2]^+ \bigr|_V^2, \mbox{ for any } t \in [0,T].
  \end{equation*}
\end{lemma}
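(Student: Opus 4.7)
The plan is to derive a Kato-type differential inequality for the positive part $[\eta^1 - \eta^2]^+$, by testing the subtracted residual equation against $[\eta^1 - \eta^2]^+$ itself. Setting $w := \eta^1 - \eta^2$, the natural reading of \eqref{CP_eta1} is that $\eta^1$ is a subsolution and $\eta^2$ a supersolution of the same pseudo-parabolic equation with forcing $\widetilde u$, so subtracting the two inequalities cancels $\widetilde u$ and yields
\begin{equation*}
\partial_t w - \Lap w - \mu^2 \Lap \partial_t w + \bigl( g(\cT_M \eta^1) - g(\cT_M \eta^2) \bigr) + \bigl( \alpha'(\cT_M \eta^1) - \alpha'(\cT_M \eta^2) \bigr) |\nabla \widetilde \theta| \leq 0
\end{equation*}
a.e.\ in $Q$. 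This is the basic inequality I would then test against $[w]^+$, which belongs to $\sV$ since $w \in W^{1,2}(0,T;W_0)$.

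The next step is the term-by-term identification. The Sobolev chain rule gives $(\partial_t w, [w]^+)_H = \frac{1}{2}\frac{d}{dt}|[w]^+|_H^2$, and integration by parts combined with Stampacchia's identity ``$\nabla w = 0$ a.e.\ on $\{w = 0\}$'' converts $(\nabla w, \nabla [w]^+)_{[H]^N}$ into $|\nabla [w]^+|_{[H]^N}^2$ and $(\nabla \partial_t w, \nabla [w]^+)_{[H]^N}$ into $\frac{1}{2}\frac{d}{dt}|\nabla [w]^+|_{[H]^N}^2$. The key sign-preserving observation is the $\alpha'$-term: the convexity of $\alpha$ from (A3) and the monotonicity of $\cT_M$ imply that $\alpha' \circ \cT_M$ is nondecreasing, so $\alpha'(\cT_M \eta^1) - \alpha'(\cT_M \eta^2) \geq 0$ on $\{w > 0\}$; multiplied by the nonnegative factor $|\nabla \widetilde\theta|\cdot [w]^+$, the entire $\alpha'$-contribution is nonnegative and can simply be discarded, so the singular factor $|\nabla \widetilde\theta|$ never appears in the estimate and no Lipschitz hypothesis on $\alpha'$ is needed. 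For the $g$-term, (A2) together with the $1$-Lipschitz property of $\cT_M$ gives the pointwise bound $|g(\cT_M \eta^1) - g(\cT_M \eta^2)|\cdot [w]^+ \leq |g'|_{L^\infty(-M,M)} ([w]^+)^2$.

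Combining these identities and estimates produces the differential inequality
\begin{equation*}
\frac{1}{2}\frac{d}{dt}\Bigl( |[w]^+|_H^2 + \mu^2 |\nabla [w]^+|_{[H]^N}^2 \Bigr) + |\nabla [w]^+|_{[H]^N}^2 \leq |g'|_{L^\infty(-M,M)}\, |[w]^+|_H^2.
\end{equation*}
Since $\mu > 0$ by (A1), the quantity inside the time derivative is equivalent, up to $\mu$-dependent constants, to $|[w]^+|_V^2$, so a scalar Gronwall argument delivers
\begin{equation*}
|[w]^+(t)|_V^2 \leq C_9 \bigl|[w(0)]^+\bigr|_V^2 = C_9 \bigl|[\eta_0^1 - \eta_0^2]^+\bigr|_V^2
\end{equation*}
for a constant $C_9 = C_9(\mu, T, |g'|_{L^\infty(-M,M)})$, which is the asserted bound.

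The step I expect to require the most care is the chain rule for $\frac{d}{dt}|\nabla [w]^+|_{[H]^N}^2$; the standard remedy is to replace the positive part by a smooth family of convex approximations $\Phi_\delta \in C^2(\R)$ (for instance a mollification of $\frac{1}{2}([\cdot]^+)^2$), carry out the computation with $\Phi_\delta'(w)$ as test function, and pass to the limit $\delta \downarrow 0$, in which Stampacchia's lemma on the level set $\{w = 0\}$ eliminates the boundary contribution. All remaining steps are direct consequences of convexity, monotonicity of the truncation, and Gronwall.
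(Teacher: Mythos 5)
Your proposal is correct and follows essentially the same route as the paper's proof: subtract the two inequalities, test against $[\eta^1 - \eta^2]^+$, discard the $\alpha'$-term by the monotonicity of $\alpha'\circ\cT_M$ on the set $\{\eta^1 > \eta^2\}$, bound the $g$-term by the Lipschitz constant $|g'|_{L^\infty(-M,M)}$, and close with Gronwall. Your remark about justifying $\frac{d}{dt}|\nabla[w]^+|_{[H]^N}^2$ via $C^2$-mollification of $[\cdot]^+$ is a useful technical aside that the paper leaves implicit, but it does not change the argument.
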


\begin{proof}
  Taking the difference of two inequality \eqref{CP_eta1} for $\eta^i$, $i = 1,2$, and multiplying the both sides by $[\eta^1 - \eta^2]^+(t)$, we see that:
  \begin{align}
    &\frac{1}{2}\frac{d}{dt} \bigl| [\eta^1 - \eta^2]^+(t) \bigr|_H^2 + \frac{\mu^2}{2} \frac{d}{dt} \bigl| \nabla[\eta^1 - \eta^2]^+(t) \bigr|_{[H]^N}^2 \label{CP_eta3}
    \\
    &\qquad + \bigl| \nabla [\eta^1 - \eta^2]^+(t) \bigr|_{[H]^N}^2 + (g(\cT_M \eta^1(t)) - g(\cT_M \eta^2(t)), [\eta^1 - \eta^2]^+(t))_H 
    \nonumber
      \\
      &\qquad + \int_\Omega\bigl((\alpha'(\cT_M \eta^1(t)) - \alpha'(\cT_M \eta^2(t))\bigr) |\nabla \theta(t)|, [\eta^1 - \eta^2]^+(t) \, dx 
    \nonumber
      \\
      & \leq 0, \ \mbox{ for a.e. } t \in (0,T).
    \nonumber
  \end{align}
  Here, from the assumption (A1), it is deduced that:
  \begin{align}
    &(g(\cT_M \eta^1(t)) - g(\cT_M \eta^2(t)), [\eta^1 - \eta^2]^+(t))_H \notag
    \\
    &\qquad \geq - |g'|_{L^\infty(-M,M)} \bigl| [\eta^1 - \eta^2]^+(t) \bigr|_H^2, \ \mbox{ for a.e. } t \in (0,T). \label{CP_eta4}
  \end{align}
  Also, by the monotonicity of $\alpha' \circ \cT_M$, we can say that:
  \begin{equation*}
    \bigl( \alpha'(\cT_M \eta^1) - \alpha'(\cT_M \eta^2)\bigr) [\eta^1 - \eta^2]^+ \geq 0, \ \mbox{ a.e. in } Q. 
  \end{equation*}
  Hence one can see that:
  \begin{gather}
    \int_\Omega\bigl((\alpha'(\cT_M \eta^1(t)) - \alpha'(\cT_M \eta^2(t))\bigr) |\nabla \theta(t)|, [\eta^1 - \eta^2]^+(t) \, dx \notag
      \\
      \geq 0, \ \mbox{ for a.e. } t \in (0,T). \label{CP_eta5}
  \end{gather}
  
  Now, in the light of \eqref{CP_eta3}--\eqref{CP_eta5}, it is deduced that:
  \begin{align*}
    &\frac{d}{dt} \left( \bigl| [\eta^1 - \eta^2]^+(t) \bigr|_H^2 + \mu^2 \bigl| \nabla[\eta^1 - \eta^2]^+(t) \bigr|_{[H]^N}^2 \right) 
    \\
    &\qquad\leq 2|g'|_{L^\infty(-M,M)} \bigl| [\eta^1 - \eta^2]^+(t)\bigr|_H^2, \ \mbox{ for a.e. } t \in (0,T).
  \end{align*}
  Applying Gronwall's inequality, we arrive at:
  \begin{align}
    &\bigl| [\eta^1 - \eta^2]^+(t) \bigr|_H^2 + \mu^2 \bigl| \nabla[\eta^1 - \eta^2]^+(t) \bigr|_{[H]^N}^2 \label{CP_eta6} 
    \\
    &\ \leq e^{2T |g'|_{L^\infty(-M,M)}} \left(  \bigl| [\eta_0^1 - \eta_0^2]^+ \bigr|_H^2 + \mu^2\bigl| \nabla [\eta_0^1 - \eta_0^2]^+ \bigr|_{[H]^N}^2 \right), \notag
    \\
    &\qquad\qquad\qquad\qquad \mbox{ for any } t \in [0,T]. \notag
  \end{align}
  \eqref{CP_eta6} finishes the proof of \cref{Lem_CPeta} with the constant:
  \begin{equation*}
    C_9 := \frac{1 + \mu^2}{1 \wedge \mu^2} e^{2T |g'|_{L^\infty(-M,M)}}.
  \end{equation*}
\end{proof}

\begin{proof}[The proof of \cref{mainThm1}]
    Let us take $\varepsilon \in (0,1)$, $\tau \in (0,\tau_{**})$ where $\tau_{**}$ is given in \cref{Lem_theta}. As a consequence of \cref{Lem_eta,Lem_theta,AP}, the following boundedness are derived:
  \begin{description}
      \item[~~$\bullet$] $\{ [\eta^\varepsilon]_\tau \, | \, \varepsilon \in (0,1), \tau \in (0,\tau_{**}) \}$ is bounded in $W^{1,2}(0,T;W_0)$,
    \item[~~$\bullet$] $\{ [\overline{\eta}^\varepsilon]_\tau \, | \, \varepsilon \in (0,1), \tau \in (0,\tau_{**}) \}$, $\{ [\underline{\eta}^\varepsilon]_\tau \, | \, \varepsilon \in (0,1), \tau \in (0,\tau_{**}) \}$ is bounded in $L^\infty(0,T;W_0)$,
    \item[~~$\bullet$] $\{ [\theta^\varepsilon]_\tau \, | \, \varepsilon \in (0,1), \tau \in (0,\tau_{**}) \}$ is bounded in $L^\infty(0,T;W_0)$ and in $W^{1,2}(0,T;V)$,
    \item[~~$\bullet$] $\{ [\overline{\theta}^\varepsilon]_\tau \, | \, \varepsilon \in (0,1), \tau \in (0,\tau_{**}) \}$, $\{ [\underline{\theta}^\varepsilon]_\tau \, | \, \varepsilon \in (0,1), \tau \in (0,\tau_{**}) \}$ is bounded in $L^\infty(0,T;W_0)$.
  \end{description}
  Therefore, by applying Aubin's type compactness theory (cf. \cite[Corollary 4]{MR0916688}), we can find sequences $\{ \varepsilon_n \}_{n = 1}^\infty \subset (0,1)$, $\{ \tau_n \}_{n = 1}^\infty \subset (0,\tau_{**})$ and a pair of functions $[\eta, \theta] \in [\sH]^2$ such that $\varepsilon_n \searrow 0$ and $\tau_n \searrow 0$, as $n \to \infty$, we obtain the following convergences as $n \to \infty$:
  \begin{equation}
    \left\{ \begin{aligned}
      &\eta_n := [\eta^{\varepsilon_n}]_{\tau_n} \to \eta \mbox{ in } C([0,T];V), \mbox{ and weakly in } W^{1,2}(0,T;W_0),
      \\
      &\theta_n := [\theta^{\varepsilon_n}]_{\tau_n} \to \theta \mbox{ in } C([0,T];V), \mbox{ weakly in } W^{1,2}(0,T;V),
      \\
      &\qquad \mbox{and weakly-$*$ in } L^\infty(0,T;W_0). 
    \end{aligned}\right. \label{conv_1}
  \end{equation}
  Besides, having in mind:
  \begin{equation*}
    \left\{ \begin{aligned}
      &\max\left\{ |[\overline{\eta}^{\varepsilon_n}]_{\tau_n} - \eta_n|_V, \, |[\underline{\eta}^{\varepsilon_n}]_{\tau_n} - \eta_n|_V \right\} \leq \int_{\Delta_{i,\tau}} |\partial_t \eta_n|_V \,dt < \infty,
      \\
      &\max\left\{ |[\overline{\theta}^{\varepsilon_n}]_{\tau_n} - \theta_n|_V, \, |[\underline{\theta}^{\varepsilon_n}]_{\tau_n} - \theta_n|_V \right\} \leq \int_{\Delta_{i,\tau}} |\partial_t \theta_n|_V \,dt < \infty,
    \end{aligned} \right. \label{conv_2}
  \end{equation*}
  we can derive that:
  \begin{equation}
    \left\{ \begin{aligned}
      &\overline{\eta}_n := [\overline{\eta}^{\varepsilon_n}]_{\tau_n} \to \eta \mbox{ and } \underline{\eta}_n := [\underline{\eta}^{\varepsilon_n}]_{\tau_n} \to \eta \mbox{ in } L^\infty(0,T;V), \mbox{ and }
      \\
      &\mbox{weakly-$*$ in } L^\infty(0,T;W_0),
      \\
      &\overline{\theta}_n := [\overline{\theta}^{\varepsilon_n}]_{\tau_n} \to \theta \mbox{ and } \underline{\theta}_n := [\underline{\theta}^{\varepsilon_n}]_{\tau_n} \to \theta \mbox{ in } L^\infty(0,T;V), \mbox{ and }
      \\
      &\mbox{weakly-$*$ in } L^\infty(0,T;W_0).
    \end{aligned}\right. \label{conv_3}
  \end{equation}

  Now, we verify that the limiting pair $[\eta, \theta]$ satisfies (S0)--(S3). Let us take an arbitrary open interval $I \subset (0,T)$. Then, in the light of \eqref{AP_eta}, \eqref{AP_theta}, and the convexity of $\gamma_\varepsilon$, the sequences as in \eqref{conv_1} and \eqref{conv_3} should fulfill the following two variational forms:
  \begin{align}
    &\int_I (\partial_t \eta_n(t), \varphi)\,dt + \int_I (\nabla (\overline{\eta}_n + \mu^2 \partial_t \eta_n)(t), \nabla \varphi)_{[H]^N} \,dt \label{conv_4}
    \\
    &\quad + \int_I (g(\cT_{M} \,\overline{\eta}_n(t)), + \alpha'(\cT_{M} \,\overline{\eta}_n(t))\gamma_\varepsilon(\nabla \overline{\theta}_n(t)), \varphi)_H \,dt = \int_I ([\overline{u}]_{\tau_n}(t), \varphi)_H \,dt,\notag
    \\
    &\qquad\qquad\qquad\mbox{ for all $\varphi \in V$, and $n = 1,2,3,\dots, n_\tau$,} \notag
  \end{align}
  and
  \begin{align}
    &\int_I \bigl((\alpha_0(\cT_{M} \underline{\eta}_n) \partial_t \theta_n)(t), \overline{\theta}_n(t) - \psi \bigr)_H \,dt +\int_I \int_\Omega \alpha(\underline{\eta}_n(t)) \gamma_\varepsilon(\nabla \overline{\theta}_n(t)) \,dxdt \notag
    \\
    &\quad + \nu^2 \int_I (\nabla \partial_t \theta_n(t), \nabla (\overline{\theta}_n(t) - \psi))_{[H]^N}\,dt \label{conv_5}
    \\
    &\quad \leq \int_I \int_\Omega \alpha(\underline{\eta}_n(t)) \gamma_\varepsilon(\nabla \psi)\,dxdt + \int_I ([\overline{v}]_{\tau_n}(t), \overline{\theta}_n(t) - \psi)_H \,dt,\notag
    \\
    &\qquad\qquad\qquad\mbox{ for all $\psi \in V$, and $n = 1,2,3,\dots, n_\tau$.}\notag
  \end{align}
  On this basis, having in mind \eqref{conv_1}, (A1), (A2), and the fact:
  \begin{equation*}
    \gamma_\varepsilon \to |\cdot| \mbox{ uniformly on } \R^N, \mbox{ as } \varepsilon \to 0, 
  \end{equation*}
  letting $n \to \infty$ in \eqref{conv_4} and \eqref{conv_5} yields that:
  \begin{align}
    &\int_I (\partial_t \eta(t), \varphi)\,dt + \int_I (\nabla (\eta + \mu^2 \partial_t \eta)(t), \nabla \varphi)_{[H]^N} \,dt \label{conv_6}
    \\
    &\quad + \int_I (g(\cT_{M} \,\eta(t)) + \alpha'(\cT_{M} \,\eta(t))|\nabla \theta(t)|, \varphi)_H \,dt = \int_I (u(t), \varphi)_H \,dt, \notag
    \\
    &\qquad\qquad\qquad\qquad\qquad\qquad\mbox{ for any } \varphi \in V, \notag
  \end{align}
  and
  \begin{align*}
    &\int_I \bigl((\alpha_0(\cT_{M} \eta) \partial_t \theta)(t), \theta(t) - \psi \bigr)_H \,dt +\int_I \int_\Omega \alpha(\eta(t)) |\nabla \theta(t)| \,dxdt \label{conv_7}
    \\
    &\quad + \nu^2 \int_I (\nabla \partial_t \theta(t), \nabla (\theta(t) - \psi))_{[H]^N}\,dt 
    \\
    &\quad \leq \int_I \int_\Omega \alpha(\eta(t)) |\nabla \psi| \,dxdt + \int_I (v(t), \theta(t) - \psi)_H \,dt, \ \mbox{ for any } \psi \in V,
  \end{align*}
  respectively. Since $I \subset (0,T)$ is arbitrary, $[\eta, \theta]$ should satisfy (S1) and (S2).
  
  Next, let us verify $\eta \in L^\infty(Q)$. By \eqref{ken02}, the following inequalities can be obtained:
  \begin{gather*}
    \left\{ \begin{aligned}
      &\partial_t M - \Lap (M + \mu^2 \partial_t M) + g(M) + \alpha'(M)|\nabla \theta(t)| \geq u,
      \\
      &\partial_t (-M) - \Lap ((-M) + \mu^2 \partial_t (-M)) + g(-M) + \alpha'(-M)|\nabla \theta(t)| \leq u,
    \end{aligned} \right.
    \\
    \mbox{a.e. in } Q.
  \end{gather*}
  Hence, applying \cref{Lem_CPeta} to the case when
  \begin{equation*}
    [\eta^1, \eta^2, \widetilde \theta, \widetilde u] = [\eta, M, \theta, u], ~~ [\eta_0^1, \eta_0^2] = [\eta_0, M],
  \end{equation*}
  and
  \begin{equation*}
    [\eta^1, \eta^2, \widetilde \theta, \widetilde u] = [-M, \eta, \theta, u], ~~ [\eta_0^1, \eta_0^2] = [-M, \eta_0],
  \end{equation*}
  we arrive at:
  \begin{align}
    &\left\{ \begin{aligned}
      &\bigl| [\eta - M]^+(t) \bigr|_V^2 \leq C_9 \bigl| [\eta_0 - M]^+ \bigr|_V^2 = 0,
      \\
      &\bigl| [- M - \eta]^+(t) \bigr|_V^2 \leq C_9 \bigl| [- M - \eta_0]^+ \bigr|_V^2 = 0,
    \end{aligned} \right.
     \mbox{ for any } t \in [0,T],
  \end{align}
  respectively. This implies that:
  \begin{equation*}
    |\eta(t)|_{L^\infty(\Omega)} \leq M, \ \mbox{ for any } t \in [0,T].
  \end{equation*}

  Finally, due to \eqref{conv_1}, the condition (S3) as in \cref{mainThm1} is immediately confirmed as follows:
  \begin{equation*}
    \eta(0) = \lim_{n \to \infty} \eta_n(0) = \eta_0 \mbox{ and } \theta(0) = \lim_{n \to \infty} \theta_n(0) = \theta_0 \mbox{ in } H.
  \end{equation*}

  Thus, we conclude that $[\eta, \theta]$ is a solution to (S).
  
\end{proof}

\subsection{Proof of \cref{mainThm2}}
Let $[\eta^k, \theta^k], \, k = 1,2$, be the solutions to (S) corresponding to initial values $\eta_0^k, \, \theta_0^k$ and forcings $u^k, \, v^k$, $k = 1,2$. Let us set
\begin{equation}
  M_0 := |\eta^1|_{L^\infty(Q)} \vee |\eta^2|_{L^\infty(Q)}, \mbox{ and } \delta_*(\nu) := 1 \wedge \delta_\alpha \wedge \nu^2, \label{ken03}
\end{equation}
and take the difference between the variational formulas for $\eta^k, \, k = 1,2$, and put $\varphi := (\eta^1 - \eta^2)(t)$. Then, by using (A1), the monotonicity of $\alpha'$, and Young's inequality, we see that:
\begin{align}
  &\frac{1}{2}\frac{d}{dt}\bigl( |(\eta^1 - \eta^2)(t)|_H^2 + \mu^2 |\nabla (\eta^1 - \eta^2)(t)|_{[H]^N}^2 \bigr) 
    \label{uni_1}
    \\
    & \quad\qquad- |g|_{L^\infty(-M_0,M_0)}|(\eta^1 - \eta^2)(t)|_H^2
  \nonumber
    \\
    &\quad \leq \int_\Omega \alpha'(\eta^1(t)) (\eta^2 - \eta^1)(t) |\nabla \theta^1(t)| \, dx
    \nonumber
    \\
    &\quad\qquad + \int_\Omega \alpha'(\eta^2(t)) (\eta^1 - \eta^2)(t) |\nabla \theta^2(t)| \, dx
    \nonumber
  \\
    &\quad\qquad + \bigl( (\eta^1 - \eta^2)(t), (u^1 - u^2)(t) \bigr)_H
  \nonumber
  \\
    &\quad \leq \int_\Omega |\alpha(\eta^1(t)) -\alpha(\eta^2(t))||\nabla (\theta^1 -\theta^2)(t)|\,dx
  \nonumber
  \\
  &\quad\qquad + |(\eta^1 - \eta^2)(t)|_H|(u^1 - u^2)(t)|_H
  \nonumber
  \\
  &\quad \leq \frac{|\alpha'|_{L^\infty(-M_0,M_0)}}{2}\left( |(\eta^1 - \eta^2)(t)|_H^2 + |\nabla(\theta^1 - \theta^2)(t)|_{[H]^N}^2 \right)
  \nonumber
  \\
  &\quad\qquad + \frac{1}{2}|(\eta^1 - \eta^2)(t)|_H^2 + \frac{1}{2}|(u^1 - u^2)(t)|_H^2, \ \mbox{ for a.e. }t > 0.
  \nonumber
\end{align}
On the other hand, by putting $\psi = \theta^2$ in the variational inequality for $\theta^1$, and $\psi = \theta^1$ in the one for $\theta^2$, and by taking the sum of two inequalities, we have:
\begin{align}
  &\bigl( \alpha_0(\eta^1)\partial_t \theta^1(t) - \alpha_0(\eta^2)\partial_t \theta^2(t), (\theta^1 - \theta^2)(t) \bigr)_H 
    \label{uni_2}
    \\
    & \quad \qquad + \frac{1}{2}\frac{d}{dt} \bigl( \nu^2|\nabla (\theta^1 - \theta^2)(t)|_{[H]^N}^2 \bigr)
  \nonumber
    \\
    &\quad \leq \int_\Omega \alpha(\eta^1(t)) \bigl( |\nabla \theta^2(t)| - |\nabla \theta^1(t)| \bigr) \, dx 
    \nonumber
    \\
    &\quad\qquad + \int_\Omega \alpha(\eta^2(t)) \bigl( |\nabla \theta^1(t)| - |\nabla \theta^2(t)| \bigr) \, dx 
  \nonumber
  \\
    &\quad\qquad+ \bigl((\theta^1 - \theta^2)(t),(v^1 - v^2)(t) \bigr)_H 
  \nonumber
    \\
    &\quad \leq \int_\Omega |\alpha(\eta^1(t)) -\alpha(\eta^2(t))|\nabla (\theta^1 - \theta^2)(t)|\,dx
  \nonumber
  \\
  &\quad\qquad+ |(\theta^1 - \theta^2)(t)|_H|(v^1 - v^2)(t)|_H 
  \nonumber
  \\
  &\quad \leq \frac{|\alpha'|_{L^\infty(-M_0,M_0)}}{2}\left( |(\eta^1 - \eta^2)(t)|_H^2 + |\nabla(\theta^1 - \theta^2)(t)|_{[H]^N}^2 \right)
  \nonumber
  \\
  &\quad\qquad + \frac{1}{2}|(\theta^1 - \theta^2)(t)|_H^2 + \frac{1}{2}|(v^1 - v^2)(t)|_H^2, \ \mbox{ a.e. }t > 0.
  \nonumber
\end{align}
Here, we can compute the first term in \eqref{uni_2} as follows:
\begin{align}
  &\quad\bigl( \alpha_0(\eta^1(t))\partial_t \theta^1(t) - \alpha_0(\eta^2(t))\partial_t \theta^2(t), (\theta^1 - \theta^2)(t) \bigr)_H \label{dm04}
  \\
    &= \frac{1}{2}\frac{d}{dt} |\sqrt{\alpha_0(\eta^1(t))}(\theta^1 - \theta^2)(t)|_H^2 - \frac{1}{2}\int_\Omega \alpha_0'(\eta^1(t)) \partial_t \eta^1(t) |(\theta^1 - \theta^2)(t)|^2 \,dx \notag
  \\
    &\quad + \int_\Omega (\alpha_0(\eta^1(t)) - \alpha_0(\eta^2(t)))\partial_t \theta^2(t) (\theta^1 - \theta^2)(t) \,dx. \notag
\end{align}
Also, by using \eqref{ken03}, the continuous embedding from $H^1(\Omega)$ to $L^4(\Omega)$ under $ N \leq 3 $, and Young's inequality, one can see that:
\begin{align}
  &-\int_\Omega \alpha_0'(\eta^1(t)) \partial_t \eta^1(t) |(\theta^1 - \theta^2)(t)|^2 \,dx \label{dm05}
  \\
  \geq& -|\alpha_0'|_{L^\infty(-M_0, M_0)} |\partial_t \eta^1(t)|_H |(\theta_1 - \theta^2)(t)|_{L^4(\Omega)}^2 \notag
  \\
  \geq& -(C_{H^1}^{L^4})^2 |\alpha_0'|_{L^\infty(-M_0, M_0)} |\partial_t \eta^1(t)|_H |(\theta_1 - \theta^2)(t)|_V^2 \notag
  \\
  \geq& -\frac{(C_{H^1}^{L^4})^2|\alpha_0'|_{L^\infty(-M_0,M_0)}}{\delta_*(\nu)}|\partial_t \eta^1(t)|_H \cdot \notag
  \\
  & \qquad \cdot \bigl( |\sqrt{\alpha_0(\eta^1(t))}(\theta^1 - \theta^2)(t)|_H^2 + \nu^2 |\nabla (\theta^1 - \theta^2)(t)|_{[H]^N}^2 \bigr), \notag
\end{align}
and
\begin{align}
  &\int_\Omega (\alpha_0(\eta^1(t)) - \alpha_0(\eta^2(t)))\partial_t \theta^2(t) (\theta^1 - \theta^2)(t) \,dx \label{dm06}
  \\
  \geq& -|\alpha_0'|_{L^\infty(-M_0,M_0)} |\partial_t \theta^2(t)|_{L^4(\Omega)} |(\eta^1 - \eta^2)(t)|_H |(\theta^1 - \theta^2)(t)|_{L^4(\Omega)} \notag
  \\
  \geq& -(C_{H^1}^{L^4})^2 |\alpha_0'|_{L^\infty(-M_0,M_0)} |\partial_t \theta^2(t)|_V |(\eta^1 - \eta^2)(t)|_H |(\theta^1 - \theta^2)(t)|_V \notag
  \\
  \geq& -\frac{(C_{H^1}^{L^4})^2 |\alpha_0'|_{L^\infty(-M_0,M_0)} }{2}|\partial_t \theta^2(t)|_V \bigl( |(\eta^1 - \eta^2)(t)|_H^2 + |(\theta^1 - \theta^2)(t)|_V^2 \bigr) \notag
  \\
  \geq & -\frac{(C_{H^1}^{L^4})^2|\alpha_0'|_{L^\infty(-M_0,M_0)}}{2\delta_*(\nu)}|\partial_t \theta^2(t)|_V \cdot \notag
  \\
  &\quad \cdot \bigl( |(\eta^1 - \eta^2)(t)|_H^2 + |\sqrt{\alpha_0(\eta^1(t))}(\theta^1 - \theta^2)(t)|_H^2 + \nu^2|\nabla (\theta^1 - \theta^2)(t)|_{[H]^N}^2 \bigr), \notag
  \\
  &\qquad\qquad\qquad\qquad\qquad \mbox{ for a.e. } t > 0, \notag
\end{align}
where $C_{H^1}^{L^4}$ is a constant of the continuous embedding from $H^1(\Omega)$ to $L^4(\Omega)$.

Therefore, putting
\begin{align}
  J_1(t) &:= |(u^1 - u^2)(t)|_H^2 + |(v^1 - v^2)(t)|_H^2, \ \mbox{ for } t \geq 0, \notag
\end{align}
and
\begin{equation*}
  \widetilde C_{10} := 2 \bigl( |\alpha'|_{L^\infty(-M_0,M_0)} + |g'|_{L^\infty(-M_0,M_0)} + (C_{H^1}^{L^4})^2 |\alpha_0'|_{L^\infty(-M_0,M_0)} + 1 \bigr),
\end{equation*}
it is deduced from \eqref{uni_1}--\eqref{dm06} that:
\begin{equation}
  \frac{d}{dt} J(t) \leq \frac{\widetilde C_{10}}{\delta_*(\nu)} \bigl( |\partial_t \eta^1(t)|_H + |\partial_t \theta^2(t)|_V + 1 \bigr) J(t) + J_1(t), \ \mbox{ a.e. } t > 0, \label{uni_4}
\end{equation}
Applying Gronwall's lemma in \eqref{uni_4}, it can be obtained that for any $T > 0$,:
\begin{align*}
  J(t) &\leq C_1(\nu) \left( J(0) + \int_0^T J_1(s)\,ds \right), \ \mbox{ for any } t \in [0,T].
\end{align*}
with
\begin{equation*}
  C_1(\nu) := \exp\left( \frac{\widetilde C_{10}\sqrt{T}}{\delta_*(\nu)} \bigl( |\partial_t \eta^1|_\sH + |\partial_t \theta^2|_{\sV} + \sqrt{T} \bigr) \right)
\end{equation*}

Thus, we finish the proof of \cref{mainThm2}.
\qed


\begin{thebibliography}{10}

  \bibitem{aiki2023class}
  {\sc T.~Aiki, D.~Mizuno, and K.~Shirakawa}, {\em A class of initial-boundary
    value problems governed by pseudo-parabolic weighted total variation flows},
    Advances in Mathematical Sciences and Applications, 32 (2023), pp.~311--341,
    \url{https://mcm-www.jwu.ac.jp/~aikit/AMSA/pdf/abstract/2023/Top_2023_015.pdf}.
  
  \bibitem{MR1259102}
  {\sc M.~Amar and G.~Bellettini}, {\em A notion of total variation depending on
    a metric with discontinuous coefficients}, Ann. Inst. H. Poincar\'e Anal. Non
    Lin\'eaire, 11 (1994), pp.~91--133,
    \url{https://doi.org/10.1016/S0294-1449(16)30197-4}.
  
  \bibitem{MR2306643}
  {\sc M.~Amar, V.~De~Cicco, and N.~Fusco}, {\em A relaxation result in {BV} for
    integral functionals with discontinuous integrands}, ESAIM Control Optim.
    Calc. Var., 13 (2007), pp.~396--412,
    \url{https://doi.org/10.1051/cocv:2007015}.
  
  \bibitem{MR1857292}
  {\sc L.~Ambrosio, N.~Fusco, and D.~Pallara}, {\em Functions of Bounded
    Variation and Free Discontinuity Problems}, Oxford Mathematical Monographs,
    The Clarendon Press, Oxford University Press, New York, 2000.
  
  \bibitem{MR4218112}
  {\sc H.~Antil, S.~Kubota, K.~Shirakawa, and N.~Yamazaki}, {\em Optimal control
    problems governed by 1-{D} {K}obayashi-{W}arren-{C}arter type systems}, Math.
    Control Relat. Fields, 11 (2021), pp.~253--289,
    \url{https://doi.org/10.3934/mcrf.2020036}.
  
  \bibitem{MR4395725}
  {\sc H.~Antil, S.~Kubota, K.~Shirakawa, and N.~Yamazaki}, {\em Constrained
    optimization problems governed by {PDE} models of grain boundary motions},
    Adv. Nonlinear Anal., 11 (2022), pp.~1249--1286,
    \url{https://doi.org/10.1515/anona-2022-0242}.
  
  \bibitem{MR3888633}
  {\sc H.~Antil, K.~Shirakawa, and N.~Yamazaki}, {\em A class of parabolic
    systems associated with optimal controls of grain boundary motions}, Adv.
    Math. Sci. Appl., 27 (2018), pp.~299--336.
  
  \bibitem{MR3288271}
  {\sc H.~Attouch, G.~Buttazzo, and G.~Michaille}, {\em Variational analysis in
    {S}obolev and {BV} spaces}, vol.~17 of MOS-SIAM Series on Optimization,
    Society for Industrial and Applied Mathematics (SIAM), Philadelphia, PA;
    Mathematical Optimization Society, Philadelphia, PA, second~ed., 2014,
    \url{https://doi.org/10.1137/1.9781611973488},
  \newblock Applications to PDEs and optimization.
  
  \bibitem{MR0348562}
  {\sc H.~Br\'ezis}, {\em Op\'erateurs Maximaux Monotones et Semi-groupes de
    Contractions dans les Espaces de {H}ilbert}, North-Holland Publishing Co.,
    Amsterdam-London; American Elsevier Publishing Co., Inc., New York, 1973.
  \newblock North-Holland Mathematics Studies, No. 5. Notas de Matem\'atica (50).
  
  \bibitem{emmrich1999discrete}
  {\sc E.~Emmrich}, {\em Discrete versions of gronwall's lemma and their
    application to the numerical analysis of parabolic problems}, Tech. Report
    637, Institute of Mathematics, Technische Universit\"{a}t Berlin,
    ``\href{http://www3.math.tu-berlin.de/preprints/files/Preprint-637-1999.pdf}{http://www3.math.tu-berlin.de/preprints/files/Preprint-637-1999.pdf}'',
    1999.
  
  \bibitem{MR3409135}
  {\sc L.~C. Evans and R.~F. Gariepy}, {\em Measure Theory and Fine Properties of
    Functions}, Textbooks in Mathematics, CRC Press, Boca Raton, FL, revised~ed.,
    2015.
  
  \bibitem{MR0775682}
  {\sc E.~Giusti}, {\em Minimal Surfaces and Functions of Bounded Variation},
    vol.~80 of Monographs in Mathematics, Birkh\"auser Verlag, Basel, 1984,
    \url{https://doi.org/10.1007/978-1-4684-9486-0}.
  
  \bibitem{MR2469586}
  {\sc A.~Ito, N.~Kenmochi, and N.~Yamazaki}, {\em A phase-field model of grain
    boundary motion}, Appl. Math., 53 (2008), pp.~433--454,
    \url{https://doi.org/10.1007/s10492-008-0035-8}.
  
  \bibitem{MR1752970}
  {\sc R.~Kobayashi, J.~A. Warren, and W.~C. Carter}, {\em A continuum model of
    grain boundaries}, Phys. D, 140 (2000), pp.~141--150,
    \url{https://doi.org/10.1016/S0167-2789(00)00023-3}.
  
  \bibitem{MR1794359}
  {\sc R.~Kobayashi, J.~A. Warren, and W.~C. Carter}, {\em Grain boundary model
    and singular diffusivity}, in Free boundary problems: theory and
    applications, {II} ({C}hiba, 1999), vol.~14 of GAKUTO Internat. Ser. Math.
    Sci. Appl., Gakk\=otosho, Tokyo, 2000, pp.~283--294.
  
  \bibitem{MR4228007}
  {\sc S.~Kubota, R.~Nakayashiki, and K.~Shirakawa}, {\em Optimal control
    problems for 1{D} parabolic state-systems of {KWC} types with dynamic
    boundary conditions}, Adv. Math. Sci. Appl., 29 (2020), pp.~583--637.
  
  \bibitem{kubota2023periodic}
  {\sc S.~Kubota and K.~Shirakawa}, {\em Periodic solutions to
    kobayashi--warren--carter systems}, Advances in Mathematical Sciences and
    Applications, 32 (2023), pp.~511--553,
    \url{https://mcm-www.jwu.ac.jp/~aikit/AMSA/pdf/abstract/2023/Top_2023_022.pdf}.
  
  \bibitem{MR3268865}
  {\sc S.~Moll and K.~Shirakawa}, {\em Existence of solutions to the
    {K}obayashi--{W}arren--{C}arter system}, Calc. Var. Partial Differential
    Equations, 51 (2014), pp.~621--656,
    \url{https://doi.org/10.1007/s00526-013-0689-2}.
  
  \bibitem{MR3670006}
  {\sc S.~Moll, K.~Shirakawa, and H.~Watanabe}, {\em Energy dissipative solutions
    to the {K}obayashi--{W}arren--{C}arter system}, Nonlinearity, 30 (2017),
    pp.~2752--2784, \url{https://doi.org/10.1088/1361-6544/aa6eb4}.
  
  \bibitem{MR4352617}
  {\sc S.~Moll, K.~Shirakawa, and H.~Watanabe}, {\em Kobayashi-{W}arren-{C}arter
    type systems with nonhomogeneous {D}irichlet boundary data for crystalline
    orientation}, Nonlinear Anal., 217 (2022), pp.~Paper No. 112722, 44,
    \url{https://doi.org/10.1016/j.na.2021.112722}.
  
  \bibitem{nakayashiki2023kobayashi}
  {\sc R.~Nakayashiki and K.~Shirakawa}, {\em Kobayashi-warren-carter system of
    singular type under dynamic boundary condition}, Discrete and Continuous
    Dynamical Systems -- S, 16 (2023), pp.~3746--3783,
    \url{https://doi.org/10.3934/dcdss.2023162}.
  
  \bibitem{MR3038131}
  {\sc K.~Shirakawa, H.~Watanabe, and N.~Yamazaki}, {\em Solvability of
    one-dimensional phase field systems associated with grain boundary motion},
    Math. Ann., 356 (2013), pp.~301--330,
    \url{https://doi.org/10.1007/s00208-012-0849-2}.
  
  \bibitem{MR0916688}
  {\sc J.~Simon}, {\em Compact sets in the space {$L^p(0,T;B)$}}, Ann. Mat. Pura
    Appl. (4), 146 (1987), pp.~65--96, \url{https://doi.org/10.1007/BF01762360}.
  
  \bibitem{MR3155454}
  {\sc N.~Yamazaki}, {\em Global attractors for non-autonomous phase-field
    systems of grain boundary motion with constraint}, Adv. Math. Sci. Appl., 23
    (2013), pp.~267--296.
  
  \end{thebibliography}

\end{document}